\documentclass[12pt]{article}

\oddsidemargin  0pt     
\evensidemargin 0pt     
\marginparwidth 40pt    
\marginparsep 10pt      
\topmargin 0pt           
\headsep 10pt            

\textheight 8.4in      
\textwidth 6.3in         

\usepackage{comment}

\usepackage{amsmath, amsthm, amssymb}
\usepackage{enumerate}
\newtheorem{theorem}{}[section]
\newtheorem{lemma}[theorem]{}

\newtheoremstyle{styleclaim}{}{}{\itshape}{}{}{}{ }{(\thmnumber{#2})}

\theoremstyle{styleclaim}

\newcommand{\cref}[1]{(\ref{#1})}

\usepackage{tikz}

\usepackage{tkz-graph}

\begin{document}

\title{Simplicial vertices in graphs with no induced four-edge path or four-edge antipath, and the $H_6$-conjecture}

\date{\today}
\author{Maria Chudnovsky\thanks{Columbia University, New York, NY 10027, USA. E-mail: mchudnov@columbia.edu. Partially supported by NSF grants IIS-1117631 and DMS-1001091.} \and Peter Maceli\thanks{Columbia University, New York, NY 10027, USA. E-mail: plm2109@columbia.edu.}}

\maketitle

\begin{abstract}
Let $\mathcal{G}$ be the class of all graphs with no induced four-edge path or four-edge antipath. Hayward and Nastos \cite{MS} conjectured that every prime graph in $\mathcal{G}$ not isomorphic to the cycle of length five is either a split graph or contains a certain useful arrangement of simplicial and antisimplicial vertices. In this paper we give a counterexample to their conjecture, and prove a slightly weaker version. Additionally, applying a result of the first author and Seymour \cite{grow} we give a short proof of Fouquet's result \cite{C5} on the structure of the subclass of bull-free graphs contained in $\mathcal{G}$.

\end{abstract}

\section{Introduction}

All graphs in this paper are finite and simple. Let $G$ be a graph. The \textit{complement} $\overline{G}$ of $G$ is the graph with vertex set $V(G)$, such that two vertices are adjacent in $G$ if and only if they are non-adjacent in $\overline{G}$. For a subset $X$ of $V(G)$, we denote by $G[X]$ \textit{the subgraph of $G$ induced by $X$}, that is, the subgraph of $G$ with vertex set $X$ such that two vertices are adjacent in $G[X]$ if and only if they are adjacent in $G$. Let $H$ be a graph. If $G$ has no induced subgraph isomorphic to $H$, then we say that $G$ is \textit{$H$-free}. If $G$ is not $H$-free, \textit{$G$ contains $H$}, and \textit{a copy of $H$ in $G$} is an induced subgraph of $G$ isomorphic to $H$. For a family $\mathcal{F}$ of graphs, we say that $G$ is \textit{$\mathcal{F}$-free} if $G$ is $F$-free for every $F\in \mathcal{F}$. 

We denote by \textit{$P_{n+1}$ the path with $n+1$ vertices and $n$ edges}, that is, the graph with distinct vertices $\{p_0,...,p_n\}$ such that $p_i$ is adjacent to $p_j$ if and only if $|i-j|=1$. For a graph $H$, and a subset $X$ of $V(G)$, if $G[X]$ is a copy of $H$ in $G$, then we say that \textit{$X$ is an $H$}. By convention, when explicitly describing a path we will list the vertices in order. In this paper we are interested in understanding the class of $\{P_5,\overline{P_5}\}$-free graphs. 

Let $A$ and $B$ be disjoint subsets of $V(G)$. For a vertex $b\in V(G)\setminus A$, we say that \textit{$b$ is complete to $A$} if $b$ is adjacent to every vertex of $A$, and that \textit{$b$ is anticomplete to $A$} if $b$ is non-adjacent to every vertex of $A$. If every vertex of $A$ is complete to $B$, we say \textit{$A$ is complete to $B$}, and that \textit{$A$ is anticomplete to $B$} if every vertex of $A$ is anticomplete to $B$. If $b\in V(G)\setminus A$ is neither complete nor anticomplete to $A$, we say that \textit{$b$ is mixed on $A$}. A \textit{homogeneous set} in a graph $G$ is a subset $X$ of $V(G)$ with $1< |X|< |V(G)|$ such that no vertex of $V(G)\backslash X$ is mixed on $X$. We say that a graph is \textit{prime} if it has at least four vertices, and no homogeneous set.

Let us now define the \textit{substitution} operation. Given graphs $H_1$ and $H_2$, on disjoint vertex sets, each with at least two vertices, and $v\in V(H_1)$, we say that \textit{$H$ is obtained from $H_1$ by substituting $H_2$ for $v$}, or \textit{obtained from $H_1$ and $H_2$ by substitution} (when the details are not important) if:

\begin{itemize}
\item $V(H)=(V(H_1)\cup V(H_2))\setminus\{v\}$,
\item $H[V(H_2)]=H_2$,
\item $H[V(H_1)\setminus\{v\}]=H_1[V(H_1)\setminus \{v\}]$, and
\item $u\in V(H_1)$ is adjacent in $H$ to $w\in V(H_2)$ if and only if $w$ is adjacent to $v$ in $H_1$.
\end{itemize}

Thus, a graph $G$ is obtained from smaller graphs by substitution if and only if $G$ is not prime. Since $P_5$ and $\overline{P_5}$ are both prime, it follows that if $H_1$ and $H_2$ are $\{P_5,\overline{P_5}\}$-free graphs, then any graph obtained from $H_1$ and $H_2$ by substitution is $\{P_5,\overline{P_5}\}$-free. Hence, in this paper we are interested in understanding the class of prime $\{P_5,\overline{P_5}\}$-free graphs. 

Let $C_n$ denote the \textit{cycle of length $n$}, that is, the graph with distinct vertices $\{c_1,...,c_n\}$ such that $c_i$ is adjacent to $c_j$ if and only if $|i-j|=1$ or $n-1$. A theorem of Fouquet \cite{C5} tells us that:

\begin{lemma}\label{C5'}
Any $\{P_5,\overline{P_5}\}$-free graph that contains $C_5$ is either isomorphic to $C_5$ or has a homogeneous set.
\end{lemma}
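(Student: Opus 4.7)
The plan is to fix an induced five-cycle $C = \{c_1,c_2,c_3,c_4,c_5\}$ in $G$ (indices mod $5$), assume $G \not\cong C_5$, and produce a homogeneous set.

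First I would classify, for each $v \in V(G) \setminus C$, the set $N(v) \cap C$. By testing each possibility against the five-vertex subsets of $\{v\} \cup C$ that contain $v$, I expect to show that the only possibilities avoiding an induced $P_5$ or $\overline{P_5}$ are: $\emptyset$, all of $C$, a pair $\{c_{i-1},c_{i+1}\}$ at cyclic distance two, or a triple $\{c_{i-1},c_i,c_{i+1}\}$ of consecutive vertices. For instance, $N(v) \cap C = \{c_1\}$ exhibits the induced path $vc_1c_2c_3c_4$, and $N(v) \cap C = \{c_1,c_2\}$ exhibits $vc_2c_3c_4c_5$; the remaining forbidden cases (with $|N(v) \cap C| \in \{3,4\}$ not of the allowed form) furnish induced $\overline{P_5}$'s, either directly or by passing to $\overline{G}$, which also lies in our class since $\overline{C_5} \cong C_5$.

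Using this classification I would partition $V(G) \setminus C$ into $A$ (anticomplete to $C$), $B$ (complete to $C$), $Y_i = \{v : N(v) \cap C = \{c_{i-1},c_{i+1}\}\}$, and $Z_i = \{v : N(v) \cap C = \{c_{i-1},c_i,c_{i+1}\}\}$, and then set $T_i = \{c_i\} \cup Y_i \cup Z_i$. Every element of $T_i$ has neighborhood exactly $\{c_{i-1},c_{i+1}\}$ in $C \setminus \{c_i\}$, so each vertex of $C \setminus \{c_i\}$ is already complete or anticomplete to $T_i$. The crux is to verify the same for every vertex $u$ outside $T_i \cup C$: in each subcase (determined by whether $u$ lies in $A$, $B$, $Y_j$ or $Z_j$ with $j \ne i$) a forbidden adjacency between $u$ and a member of $T_i$ is shown to create a specific induced $P_5$ or $\overline{P_5}$ on the union of $\{u, \text{the member}\}$ with a well-chosen triple from $C$. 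For instance, $u \in A$ adjacent to some $v \in Y_i$ yields the induced $P_5$ on $\{u,v,c_{i+1},c_{i+2},c_{i+3}\}$. Consequently, whenever $Y_i \cup Z_i \ne \emptyset$, the set $T_i$ is a homogeneous set.

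If instead every $T_i$ has size one, then $V(G) \setminus C = A \cup B$, so every outside vertex is complete or anticomplete to $C$: either $A \cup B = \emptyset$ and $G \cong C_5$, or else $C$ itself is a homogeneous set. The main obstacle is the bookkeeping of subcases in the previous paragraph, which I would tame by exploiting the fivefold rotational symmetry of $C$ and the $G \leftrightarrow \overline{G}$ duality to cut the number of essentially distinct checks down to a small handful.
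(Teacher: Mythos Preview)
Your argument is correct and essentially recovers Fouquet's original direct proof: the neighbourhood classification of outside vertices into $A$, $B$, $Y_i$, $Z_i$ is accurate, and the claim that $T_i=\{c_i\}\cup Y_i\cup Z_i$ is homogeneous whenever $|T_i|>1$ does go through by the case analysis you outline (the $G\leftrightarrow\overline G$ duality, which swaps $A\leftrightarrow B$ and $Y_i\leftrightarrow Z_i$ up to relabelling of the cycle, together with the rotational symmetry, indeed cuts the checks down to a handful).

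The paper takes a genuinely different route. It argues the contrapositive: assuming $G$ is prime and contains $C_5$ properly, it invokes the Chudnovsky--Seymour ``growing without cloning'' theorem (their Lemma~\ref{growing}). Since $C_5$ is self-complementary, both $G$ and $\overline G$ contain an odd cycle, so neither is a half graph; the growing lemma then furnishes a copy $H\cong C_5$ and a single vertex $v$ with $G[V(H)\cup\{v\}]$ prime. A two-line check (using complementation to assume $|N(v)\cap V(H)|\le 2$) shows every such six-vertex prime extension of $C_5$ contains a $P_5$, a contradiction. So the paper trades your elementary but somewhat lengthy case analysis for a very short argument that leans on a nontrivial external structural theorem; your version is self-contained, while theirs is slicker but not standalone.
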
 

\noindent That is, $C_5$ is the unique prime $\{P_5$,$\overline{P_5}\}$-free graph that contains $C_5$, and so we concern ourselves with prime $\{P_5$,$\overline{P_5},C_5\}$-free graphs, the main subject of this paper. 

Let $G$ be a graph. A \textit{clique} in $G$ is a set of vertices all pairwise adjacent. A \textit{stable set} in $G$ is a set of vertices all pairwise non-adjacent. The \textit{neighborhood} of a vertex $v\in V(G)$ is the set of all vertices adjacent to $v$, and is denoted $N(v)$. A vertex $v$ is \textit{simplicial} if $N(v)$ is a clique. A vertex $v$ is \textit{antisimplicial} if $V(G)\setminus N(v)$ is a stable set, that is, if and only if $v$ is a simplicial vertex in the complement.

In \cite{MS} Hayward and Nastos proved:

\begin{lemma}\label{a}

If $G$ is a prime $\{P_5,\overline{P_5},C_5\}$-free graph, then there exists a copy of $P_4$ in $G$ whose vertices of degree one are simplicial, and whose vertices of degree two are antisimplicial.

\end{lemma}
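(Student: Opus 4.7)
The plan is to pick an induced $P_4$ of $G$ that is extremal in a self-dual sense, and then verify the four required conditions by exploiting the forbidden induced subgraphs $P_5$, $\overline{P_5}$, and $C_5$.

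First, I would note that $G$ contains an induced $P_4$: any cograph on at least four vertices has a non-trivial homogeneous set (its components if disconnected, otherwise its co-components), and cographs are exactly the $P_4$-free graphs. I would also exploit the self-duality of the statement: the class of prime $\{P_5,\overline{P_5},C_5\}$-free graphs is closed under complementation; on any four-vertex set the induced $P_4$ with vertices $a, b, c, d$ (in order) in $G$ corresponds, on the \emph{same} vertex set, to the induced $P_4$ with vertices $c, a, d, b$ (in order) in $\overline{G}$, which swaps endpoints and internal vertices; and a vertex is antisimplicial in $G$ if and only if it is simplicial in $\overline{G}$. A natural self-dual measure of how far a $P_4$ is from satisfying the lemma is the total number of \emph{defects}, i.e.\ pairs $\{x, y\}$ witnessing a failure of simpliciality at an endpoint or antisimpliciality at an internal vertex; by the correspondence above this count is preserved under $G \leftrightarrow \overline{G}$.

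Next, I would fix an induced $P_4$ $P$ with vertices $a, b, c, d$ (in order) chosen to minimize this defect count. For every vertex $v \in V(G) \setminus \{a, b, c, d\}$ I would enumerate the possible adjacency patterns of $v$ with $\{a, b, c, d\}$, eliminating those that create a forbidden five-vertex induced subgraph on $\{a, b, c, d, v\}$: for example, $v$ adjacent only to $a$ produces the induced $P_5$ on $v, a, b, c, d$; $v$ adjacent to exactly $\{a, d\}$ produces a $C_5$; and $v$ adjacent to exactly $\{a, c, d\}$ produces an induced $\overline{P_5}$, since the complement on these five vertices is the $P_5$ on $c, a, d, b, v$. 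After all such eliminations only a short list of allowed adjacency types remains. Now suppose for contradiction some condition fails, say $a$ is not simplicial, so $xy \notin E(G)$ for some $x, y \in N_G(a)$; at least one of $x, y$ lies outside $\{b, c, d\}$. Consulting the short list of allowed types for that vertex, I would produce another induced $P_4$ of $G$, sharing $c, d$ (or $b, c$) with $P$, that has strictly smaller defect count, contradicting the extremal choice of $P$. The symmetric cases ($d$ not simplicial, $b$ or $c$ not antisimplicial) are then handled uniformly through the self-duality.

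The main obstacle is twofold: pinning down the correct self-dual extremal quantity so that a contradiction-producing swap exists whenever a defect is present, and carrying out the finite case analysis of adjacency patterns carefully enough to construct that swap. The first is the conceptual crux --- several natural candidates such as $|N(a)|+|N(d)|$ or $|N(b)\cap N(c)|$ alone fail either to be self-dual or to be sensitive to the relevant swaps --- while the second is routine but somewhat lengthy.
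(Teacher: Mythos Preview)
Your approach is genuinely different from the paper's. The paper does \emph{not} argue extremally over $P_4$'s. Instead it first invests substantial effort (all of Section~2) to prove that every prime $\{P_5,\overline{P_5},C_5\}$-free graph has both a simplicial vertex and an antisimplicial vertex; this goes through a 1-join argument (\ref{join}), a common-neighbourhood lemma (\ref{nbrs}), and the key minimality lemma \ref{anti}. Once the sets $S$ of simplicial and $A$ of antisimplicial vertices are known to be non-empty, the paper shows (\ref{split}) that $G[A\cup S]$ is a split graph that is both connected and anticonnected, and then Seinsche's theorem forces a $P_4$ inside $G[A\cup S]$; any such $P_4$ automatically has the required form because $A$ is a clique and $S$ is stable. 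So the paper's proof of the statement itself is three lines, but it rests on the global existence results proved earlier.

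Your proposal has a genuine gap, and you essentially identify it yourself: you never specify the extremal quantity, and you concede that the obvious candidates ``fail either to be self-dual or to be sensitive to the relevant swaps.'' Without that quantity, the heart of the argument is missing. Concretely, suppose $a$ is not simplicial, witnessed by non-adjacent $x,y\in N(a)$ with $x\notin\{b,c,d\}$. Even after you restrict $x$ to one of the allowed adjacency types on $\{a,b,c,d\}$, you give no mechanism guaranteeing that the replacement $P_4$ (whatever it is) has strictly fewer defects: the new endpoint may well acquire fresh non-adjacent neighbours, and the internal vertices may acquire fresh non-adjacent non-neighbours. The paper's route shows why this is not a mere bookkeeping issue: establishing the bare existence of a simplicial vertex already requires a global argument (the 1-join dichotomy of \ref{join} and the minimality trick of \ref{anti}), and a purely local $P_4$-swap would, if it worked, give a dramatically shorter proof of that existence than anything known. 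Until you exhibit a specific self-dual potential and verify the swap inequality in each surviving adjacency type, the argument is a hopeful outline rather than a proof.
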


A graph $G$ is a \textit{split graph} if there is a partition $V(G)=A\cup B$ such that $A$ is a stable set and $B$ is a clique. F\"{o}ldes and Hammer \cite{split} showed:

\begin{lemma}\label{splitt}
A graph $G$ is a split graphs if and only if $G$ is a $\{C_4,\overline{C_4},C_5\}$-free graph. 
\end{lemma}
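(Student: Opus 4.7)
The plan is to prove the two implications separately. The forward implication is a short counting argument, while the backward implication requires a structural analysis of a carefully chosen maximum clique.

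For the forward direction, assume $G$ is a split graph with partition $V(G) = A \cup B$, where $A$ is stable and $B$ is a clique, and suppose for contradiction that $G$ contains some $H \in \{C_4, \overline{C_4}, C_5\}$. Each of these three graphs satisfies $\omega(H) = \alpha(H) = 2$; since $A \cap V(H)$ is stable in $H$ and $B \cap V(H)$ is a clique in $H$, we get $|A \cap V(H)| \leq 2$ and $|B \cap V(H)| \leq 2$. For $H = C_5$ this is already impossible, as $2 + 2 < 5$. For $H = C_4$, both intersections must then have exactly $2$ vertices, but the only stable pairs in $C_4$ are the two diagonals, so the two vertices of $B \cap V(H)$ form a non-edge of $C_4$, contradicting that $B$ is a clique. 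The argument for $\overline{C_4} = 2K_2$ is symmetric, and follows also by complementation since both the class of split graphs and the family $\{C_4, \overline{C_4}, C_5\}$ are closed under complements.

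For the backward direction, let $G$ be $\{C_4, \overline{C_4}, C_5\}$-free. The idea is to take a maximum clique $K$ chosen extremally, say so as to maximize the number of edges from $K$ to its complement, and to show that $I := V(G) \setminus K$ is a stable set; together with $K$, this yields the required split partition. Suppose for contradiction that $uv \in E(G)$ for some $u, v \in I$. By the maximality of $K$, there exist $a, b \in K$ (possibly equal) with $au, bv \notin E(G)$.

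In the case $a \neq b$, the four vertices $\{a, b, u, v\}$ carry the edges $ab$ and $uv$ and the non-edges $au$ and $bv$; depending on the status of $av$ and $bu$, this set induces either $C_4$ (both edges), $\overline{C_4}$ (both non-edges), or only a $P_4$. In the $P_4$ residue one brings in an additional vertex, either from $K$ or from $I$, and repeats the analysis to extract a forbidden subgraph. In the case $a = b$, for every $c \in K \setminus \{a\}$ avoiding $\overline{C_4}$ on $\{a, c, u, v\}$ forces $c$ to be adjacent to $u$ or to $v$; avoiding $C_4$ on $\{u, v, c, c'\}$ for distinct $c, c' \in K \setminus \{a\}$ forces the adjacency pattern to be uniform, say that every $c \in K \setminus \{a\}$ is adjacent to $u$. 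If every such $c$ is also adjacent to $v$, then $(K \setminus \{a\}) \cup \{u, v\}$ is a clique of size $|K|+1$, contradicting maximality of $K$. Otherwise $K' := (K \setminus \{a\}) \cup \{u\}$ is another maximum clique; comparing $|E(K', V(G)\setminus K')|$ with $|E(K, I)|$ against the extremal choice then forces the existence of a witness $w \in I \setminus \{u,v\}$ with $aw \in E$ but $wu \notin E$, and a quick subsidiary analysis of $\{a, u, v, w\}$ yields either a forbidden subgraph or a new edge inside $I$ to iterate on. I expect the main obstacle to be exactly these residual $P_4$-configurations, where one must carefully combine the extremal property of $K$ with repeated applications of the forbidden-subgraph arguments to ensure the process terminates with an actual $C_4$, $\overline{C_4}$, or $C_5$ rather than producing arbitrarily long $P_4$-chains.
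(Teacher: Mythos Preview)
The paper does not actually prove this lemma; it is quoted as a classical result of F\"{o}ldes and Hammer and cited to \cite{split} without argument. So there is no ``paper's own proof'' to compare against, and your write-up should be judged on its own.

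Your forward direction is clean and correct. The backward direction, however, is not a proof but a plan with an explicitly acknowledged gap: you reduce to the residual $P_4$ configurations and then say you ``expect'' the extremal choice of $K$ to finish things off, without actually carrying it out. Concretely, in the $a\neq b$ case with $\{b,a,v,u\}$ inducing $P_4$, ``bring in an additional vertex \ldots and repeat the analysis'' is too vague; depending on how a third clique vertex $c$ attaches to $\{u,v\}$ you sometimes get $C_4$ or $2K_2$ immediately, but in other sub-cases you only get another $P_4$ on a different four vertices, and you have not shown why this does not recurse indefinitely. Likewise, in your $a=b$ branch, after producing the swap clique $K'=(K\setminus\{a\})\cup\{u\}$ and the witness $w\in I$ with $aw\in E$, $uw\notin E$, the ``quick subsidiary analysis of $\{a,u,v,w\}$'' again lands on a $P_4$ when $vw\in E$; the actual contradiction there requires bringing in the vertex $c^*\in K\setminus\{a\}$ with $c^*v\notin E$ that you set aside earlier, and checking that $\{c^*,a,w,v,u\}$ induces $C_5$ (or $C_4$ if $c^*w\in E$). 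None of this is hard, but it has to be written down; as it stands, the argument has a real hole precisely at the step you flag as the ``main obstacle.''

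If you want a shorter route, note that $2K_2$-freeness already kills all induced cycles $C_k$ with $k\ge 6$, so a $\{C_4,\overline{C_4},C_5\}$-free graph and its complement are both chordal; one can then finish via simplicial vertices rather than an extremal maximum clique.
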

 
 Drawn in Figure 1 with its complement, $H_6$ is the graph with vertex set $\{ v_1, v_2, v_3, v_4, v_5, v_6\}$ and edge set $\{ v_1v_2, v_2v_3, v_3v_4, v_2v_5, v_3v_6, v_5v_6 \}$.

\begin{figure}\label{A}
\begin{center}  
  \begin{tikzpicture}
   \GraphInit[vstyle=Simple]

   \tikzset{VertexStyle/.style = {shape = circle,fill = black,minimum size = 5pt,inner sep=0pt}}

     \Vertex[x=0 ,y=0, Lpos=270]{1}
     \Vertex[x=2, y=0, Lpos=225]{2} 
     \Vertex[x=4, y=0, Lpos=270]{3} 
     \Vertex[x=6, y=0, Lpos=270]{4}   

     \Vertex[x=2, y=2, Lpos=90]{5} 
     \Vertex[x=4, y=2, Lpos=90]{6}

	 \Edges(1,2,3,4)
	 \Edges(2,5,6,3)

	 \Vertex[x=8 ,y=0, Lpos=270]{7}
     \Vertex[x=10, y=0, Lpos=225]{8} 
     \Vertex[x=12, y=0, Lpos=270]{9} 

     \Vertex[x=14 ,y=0, Lpos=270]{10}
     \Vertex[x=10, y=2, Lpos=225]{11} 
     \Vertex[x=12, y=2, Lpos=270]{12} 

	 \Edges(7,8,9,10)
	 \Edges(7,11,8,12,9,11)
	 \Edges(10,12)

  \end{tikzpicture}
\end{center}  
    \caption{$H_6$ and $\overline{H_6}$.}
\end{figure}
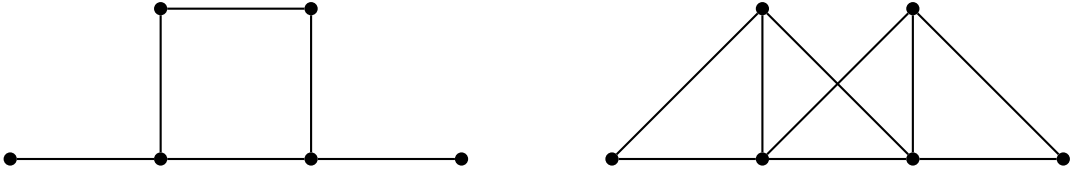

Hayward and Nastos conjectured the following:

\begin{theorem}[The $H_6$-Conjecture]\label{h6conj'}
If $G$ is a prime $\{P_5,\overline{P_5},C_5\}$-free graph which is not split, then there exists a copy of $H_6$ in $G$ or $\overline{G}$ whose two vertices of degree one are simplicial, and whose two vertices of degree three are antisimplicial.
\end{theorem}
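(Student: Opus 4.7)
\medskip

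\noindent\textbf{Proof plan.} The strategy is to start from the $P_4$ guaranteed by Lemma~\ref{a}, call it $p_1p_2p_3p_4$ with $p_1,p_4$ simplicial and $p_2,p_3$ antisimplicial, and extend it by two further vertices to an $H_6$. Note that in $H_6$ the two new vertices $v_5,v_6$ attach to $v_1v_2v_3v_4$ by the pattern $v_2v_5,\ v_3v_6,\ v_5v_6$ and nothing else. So it suffices to produce $q_5,q_6\in V(G)\setminus\{p_1,\dots,p_4\}$ with $q_5\in N(p_2)\setminus N(p_3)$, $q_6\in N(p_3)\setminus N(p_2)$, and $q_5q_6\in E(G)$: the remaining non-adjacencies then follow automatically, since $N(p_1)$ and $N(p_4)$ are cliques containing $p_2$ and $p_3$ respectively (so $q_5p_1\in E$ would force $q_5p_3\in E$ via $N(p_1)$, and $q_5p_4\in E$ would force $q_5p_3\in E$ via $N(p_4)$), while $V(G)\setminus N(p_2)$ and $V(G)\setminus N(p_3)$ are stable sets.

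I would then partition $V(G)\setminus\{p_2,p_3\}$ into $A=N(p_2)\setminus N(p_3)$, $B=N(p_3)\setminus N(p_2)$, $C=N(p_2)\cap N(p_3)$, and $D$ the set of vertices nonadjacent to both of $p_2,p_3$. Thus $p_1\in A$ and $p_4\in B$. Antisimpliciality of $p_2$ (resp.\ $p_3$) gives that $B\cup D$ (resp.\ $A\cup D$) is stable, and the simplicial conditions on $p_1,p_4$ show that $p_1$ has no neighbor in $B\cup D$ and $p_4$ has no neighbor in $A\cup D$. The question thus reduces to whether $G$ contains an edge between $A\setminus\{p_1\}$ and $B\setminus\{p_4\}$. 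If so, the $H_6$ in $G$ is produced directly. Otherwise $A$ is anticomplete to $B$, and $S:=A\cup B\cup D$ is a stable set, so $V(G)=S\cup C\cup\{p_2,p_3\}$.

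In this second case, since $G$ is not split, Lemma~\ref{splitt} (combined with $C_5$-freeness) forces $C\cup\{p_2,p_3\}$ not to be a clique, and combined with primeness some vertex of $C$ must be mixed on $S$. Here I would invoke $P_5$- and $\overline{P_5}$-freeness to rigidify the bipartite pattern between $C$ and $S$: for instance, any $c\in C$ anticomplete to some $a\in A$ together with any $b\in B$ gives the path $a$--$p_2$--$c$--$p_3$--$b$, forcing either $a$--$c$ or $b$--$c$ edges, which severely restricts the pattern. The goal is to use these constraints to locate a pair $q'_5,q'_6\in C$ with $q'_5q'_6\notin E(G)$ whose asymmetric neighborhoods on $\{p_1,p_4\}\cup S$ realise an induced $H_6$ in $\overline{G}$, with the dual $P_4$ being the complementary path $p_3p_1p_4p_2$ on the same four vertices (which inherits the required simplicial/antisimplicial data from $G$ under complementation).

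The hard part will be ruling out configurations in which the bipartite pattern between $C$ and $S$ is sufficiently balanced that neither $G$ nor $\overline{G}$ contains a realised $H_6$. Primeness, $\{P_5,\overline{P_5}\}$-freeness, and the failure of the split condition have to collaborate tightly, and I would expect the bottleneck to be a small family of ``symmetric'' configurations on $|C|$ of moderate size where every candidate pair $(q_5,q_6)$ or $(q'_5,q'_6)$ is blocked by a single stubborn non-adjacency. In fact, this is precisely the place where I would anticipate the argument to fail outright --- signalling that the theorem as stated may require weakening, and that the real content lies in classifying the obstructing configurations.
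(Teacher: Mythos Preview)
Your suspicion at the end is exactly right: the statement is \emph{false}, and the paper does not prove it --- it disproves it. Figure~2 of the paper exhibits a self-complementary prime $\{P_5,\overline{P_5},C_5\}$-free graph on twelve vertices whose only simplicial vertices are $1,4$ and whose only antisimplicial vertices are $2,3$, but in which no induced $C_4$ contains both $2$ and $3$. In your setup this means $A\setminus\{p_1\}$ is anticomplete to $B\setminus\{p_4\}$ (so no $H_6$ in $G$ with the required degree-three vertices), and by self-complementarity the same obstruction recurs in $\overline{G}$. Thus the ``small family of symmetric configurations'' you worried about is nonempty, and no amount of $\{P_5,\overline{P_5}\}$-freeness and primeness rules it out.

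What the paper \emph{does} prove is the weakening in which only one of the two degree-three vertices of the $H_6$ is required to be antisimplicial (Lemma~\ref{H6SA-S'}). Their argument is structurally different from yours: rather than fixing a single $P_4$ and partitioning around $\{p_2,p_3\}$, they take \emph{maximal} sets $A$ of antisimplicial and $S$ of simplicial vertices subject to the incidence conditions inherited from Lemma~\ref{a}, partition the rest of $V(G)$ according to completeness to $A$ and $S$, and then use Lemma~\ref{nbrs} and Lemma~\ref{mixed4} to force the extra pair of vertices. The asymmetry in the conclusion (one antisimplicial degree-three vertex rather than two) arises naturally in step~(4) of their proof, where an edge inside the ``$D$'' part produces an $H_6$ whose $C_4$ uses two genuine antisimplicial vertices, but step~(6) produces an $H_6$ whose $C_4$ uses only one.

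One small correction to your setup: the claim that $q_5p_1\in E$ would force $q_5p_3\in E$ ``via $N(p_1)$'' does not work as written, since $p_3\notin N(p_1)$. The correct reason $q_5p_1\notin E$ is that both $q_5$ and $p_1$ lie in the stable set $V(G)\setminus N(p_3)$ (antisimpliciality of $p_3$). Your argument for $q_5p_4\notin E$ via $N(p_4)$ is fine.
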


First, in Figure 2 we provide a counterexample to \ref{h6conj'}. On the other hand, we prove the following slightly weaker version:

\begin{lemma}\label{H6SA-S'}
If $G$ is a prime $\{P_5,\overline{P_5},C_5\}$-free graph which is not split, then there exists a copy of $H_6$ in $G$ or $\overline{G}$ whose two vertices of degree one are simplicial, and at least one of whose vertices of degree three is antisimplicial.
\end{lemma}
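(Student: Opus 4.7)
My approach is to extend the structured $P_4$ given by Lemma \ref{a} to an $H_6$ by attaching an induced $C_4$ to its central edge. Fix a path $P = p_1 p_2 p_3 p_4$ provided by Lemma \ref{a}, with $p_1, p_4$ simplicial and $p_2, p_3$ antisimplicial, and seek $v_i = p_i$ for $i \in \{1,2,3,4\}$ together with two new vertices $v_5 = x, v_6 = y$ realizing an induced $H_6$. A direct check shows this reduces to locating an edge $xy$ of $G$ with $x \in (N(p_2) \setminus N[p_3]) \setminus \{p_1\}$ and $y \in (N(p_3) \setminus N[p_2]) \setminus \{p_4\}$: the remaining non-adjacencies required by $H_6$ are automatic from the (anti)simpliciality of the $p_i$. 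For instance, antisimpliciality of $p_2$ places both $y$ and $p_4$ in the stable set $V(G) \setminus N(p_2)$, so $y \not\sim p_4$; and simpliciality of $p_4$, whose neighborhood is a clique containing $p_3$, yields $x \not\sim p_3 \Rightarrow x \not\sim p_4$. The symmetric pair of implications handles $x \not\sim p_1$ and $y \not\sim p_1$.

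\textbf{Reduction to $G$ containing $C_4$.} Since $G$ is not split and is $C_5$-free, Lemma \ref{splitt} forces $G$ or $\overline{G}$ to contain an induced $C_4$. The hypotheses and conclusion of Lemma \ref{H6SA-S'} are invariant under complementation — the class, primeness, and non-splittedness are all preserved, while simplicial and antisimplicial swap roles — so we may assume that $G$ itself contains an induced $C_4$.

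\textbf{Main step and expected obstacle.} The heart of the argument is to show that either the sought edge $xy$ exists (yielding an $H_6$ with both middle vertices antisimplicial, and thereby even proving the stronger $H_6$-conjecture for this instance), or else an alternative $H_6$ can be found in $G$ whose degree-3 vertices include at least one of $p_2, p_3$, providing the required antisimplicial middle. Failure of the direct extension forces one of three scenarios: (i) $N(p_2) \setminus N[p_3] \subseteq \{p_1\}$, (ii) $N(p_3) \setminus N[p_2] \subseteq \{p_4\}$, or (iii) the ``wing'' sets $A' = (N(p_2) \setminus N[p_3]) \setminus \{p_1\}$ and $B' = (N(p_3) \setminus N[p_2]) \setminus \{p_4\}$ are nonempty but anticomplete. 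In each case I would combine primeness (to rule out latent homogeneous sets), $\overline{P_5}$-freeness (which sharply restricts how outside vertices attach to $P$), and the guaranteed induced $C_4$ in $G$, either to derive a contradiction or to replace $p_1$ or $p_4$ by another simplicial vertex and build an alternative $H_6$. The hardest case is expected to be scenario (iii), where the anticompleteness of $A'$ and $B'$ interacts delicately with $N(p_2) \cap N(p_3)$ and with the vertices of the available $C_4$; the bulk of the case analysis should reside there, with the goal of carefully locating the needed replacement endpoint while preserving both simplicial degree-one vertices and at least one antisimplicial middle.
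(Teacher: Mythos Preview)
Your opening observations are correct and pleasant: if an edge $xy$ with $x\in (N(p_2)\setminus N[p_3])\setminus\{p_1\}$ and $y\in (N(p_3)\setminus N[p_2])\setminus\{p_4\}$ exists, the six vertices do form an induced $H_6$, and your reduction to $G$ containing $C_4$ is fine. But what follows is only a plan, and the plan as stated cannot work.

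The difficulty is that extending the fixed $P_4$ $p_1p_2p_3p_4$ to an $H_6$ with \emph{both} middles antisimplicial is exactly the full $H_6$-conjecture, which is false. The graph in Figure~2 sits squarely in your scenario~(iii): there $p_1p_2p_3p_4=1\,2\,3\,4$, the wing sets are $A'=\{8\}$ and $B'=\{5\}$, and $8\not\sim 5$. In that graph the simplicial set is $\{1,4\}$ and the antisimplicial set is $\{2,3\}$, so your proposed remedy ``replace $p_1$ or $p_4$ by another simplicial vertex'' is impossible---there are no other simplicial vertices---and any $H_6$ keeping both $2$ and $3$ as degree-three vertices would need a $C_4$ through $\{2,3\}$, which does not exist. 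The valid $H_6$ (e.g.\ on $\{1,2,6,4,10,5\}$) instead replaces the antisimplicial middle $p_3=3$ by a non-antisimplicial vertex $6$. Your outline never produces such a replacement vertex, and the vague appeal to ``primeness, $\overline{P_5}$-freeness, and the guaranteed $C_4$'' gives no mechanism for finding it; the guaranteed $C_4$ need not be anywhere near the fixed $P_4$.

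For comparison, the paper's proof does not try to extend a single $P_4$. It takes maximal sets $A$ of antisimplicial and $S$ of simplicial vertices (with mutual neighbour/non-neighbour conditions), partitions $V(G)=A\cup S\cup B\cup C\cup D$ according to adjacency to $A$ and $S$, and then shows: if $D$ is not stable (or $C$ not a clique) one reads off an $H_6$ with both middles in $A$; otherwise, if $D$ is nonempty, a minimality choice of $d\in D$ together with Lemma~\ref{nbrs} and Lemma~\ref{mixed4} locates an $H_6$ whose middles are some $a_d\in A$ and some $x\in C$, the latter not antisimplicial. It is precisely this global partition, and the use of \ref{nbrs}/\ref{mixed4} to manufacture the second middle vertex $x$, that your local approach is missing.
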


We say that a graph $G$ \textit{admits a $1$-join}, if $V(G)$ can be partitioned into four non-empty pairwise disjoint sets $(A,B,C,D)$, where $A$ is anticomplete to $C\cup D$, and $B$ is complete to $C$ and anticomplete to $D$. In trying to use \ref{H6SA-S'} to improve upon \ref{C5'} we conjectured the following:


\begin{theorem}
\label{conj'}
If $G$ is a $\{P_5,\overline{P_5}\}$-free graph, then either

\begin{itemize}
\item $G$ is isomorphic to $C_5$, or
\item $G$ is a split graph, or
\item $G$ has a homogeneous set, or
\item $G$ or $\overline{G}$ admits a 1-join.
\end{itemize}
\end{theorem}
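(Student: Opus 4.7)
The strategy is to reduce to the hypotheses of Lemma \ref{H6SA-S'} and then use the $H_6$ it produces to build a $1$-join explicitly.

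First I would dispose of the easy alternatives: if $G$ is a split graph, has a homogeneous set, or is isomorphic to $C_5$, one of the first three conclusions holds. Otherwise $G$ is prime and not isomorphic to $C_5$, so Lemma \ref{C5'} shows $G$ is $C_5$-free; combined with the hypothesis, $G$ is a prime $\{P_5,\overline{P_5},C_5\}$-free graph, which we may further assume is not split. Applying Lemma \ref{H6SA-S'} and replacing $G$ by $\overline{G}$ if necessary, $G$ contains an induced $H_6$ whose vertices can be labeled $v_1,\ldots,v_6$ as in Figure~1 so that $v_1,v_4$ are simplicial in $G$ and, without loss of generality, $v_2$ is antisimplicial in $G$. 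Write $K:=N(v_1)$; since $v_1$ is simplicial, $K$ is a clique, and $v_2\in K$.

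My proposed $1$-join of $G$ is $A:=\{v_1\}$, $B:=K$, $C:=\{u\in V(G)\setminus(K\cup\{v_1\}):u\text{ is complete to }K\}$, and $D:=\{u\in V(G)\setminus(K\cup\{v_1\}):u\text{ is anticomplete to }K\}$. With this choice, $A$ is automatically anticomplete to $C\cup D$, and $B$ is complete to $C$ and anticomplete to $D$ by construction, so what must be verified is that every vertex of $V(G)\setminus(K\cup\{v_1\})$ is either complete or anticomplete to $K$ (so that $C\cup D$ exhausts this set), and that both $C$ and $D$ are non-empty. The degenerate case $|K|=1$ is essentially immediate: then $K=\{v_2\}$, no vertex can be mixed on $K$, and primeness together with the non-split hypothesis, the connectedness of $G$, and the fact that a universal $v_2$ would make $V(G)\setminus\{v_2\}$ a homogeneous set, force both $C$ and $D$ to be non-empty. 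The main case is therefore $|K|\ge 2$.

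For this, suppose for contradiction that $u\in V(G)\setminus(K\cup\{v_1\})$ has both a neighbor $k$ and a non-neighbor $k'$ in $K$; since $K$ is a clique, $\{v_1,k,k',u\}$ induces a paw in $G$. The plan is to extend this paw by one or two further vertices --- drawn from $\{v_3,v_4,v_5,v_6\}$, from the stable set $V(G)\setminus(N(v_2)\cup\{v_2\})$ arising from the antisimpliciality of $v_2$, or produced by primeness --- to force an induced $P_5$, $\overline{P_5}$, or $C_5$, contradicting our hypothesis. A careful case analysis on whether $v_2\in\{k,k'\}$ and on the adjacencies of $u$ to $v_3,v_4,v_5,v_6$ should either close the argument in $G$ or prompt the symmetric construction in $\overline{G}$ with $A':=\{v_2\}$ and $B':=V(G)\setminus(N(v_2)\cup\{v_2\})$ (a clique in $\overline{G}$ because $v_2$ is simplicial there). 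I expect this mixed-vertex analysis, together with the bookkeeping that chooses between $G$ and $\overline{G}$ and between the roles of the simplicial vertices $v_1,v_4$ and the antisimplicial candidates $v_2,v_3$ of the $H_6$, to be the principal technical obstacle.
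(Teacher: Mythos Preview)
Your proposal cannot succeed because the statement is \emph{false}: the paper does not prove \ref{conj'}, it disproves it. The paragraph following the statement says ``However, \ref{conj'} does not hold, and we give a counterexample in Figure~3,'' and Section~3 confirms that the graph drawn there is a prime $\{P_5,\overline{P_5},C_5\}$-free graph that is not split and such that neither it nor its complement admits a $1$-join.

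Consequently the ``careful case analysis'' you outline in the final paragraph is not merely the principal technical obstacle---it is an impossible one. Concretely, take $G$ to be the counterexample of Figure~3. It satisfies all the hypotheses you set up (prime, $\{P_5,\overline{P_5},C_5\}$-free, not split), so Lemma~\ref{H6SA-S'} does produce an $H_6$ with the advertised simplicial and antisimplicial vertices; but for \emph{every} such $H_6$ and every choice of $A,B,C,D$ you propose, some vertex $u$ outside $K\cup\{v_1\}$ is genuinely mixed on $K$ without any forbidden $P_5$, $\overline{P_5}$, or $C_5$ appearing. The heuristic that a simplicial vertex's clique neighborhood should be a homogeneous-set-like barrier fails here: primeness guarantees that \emph{some} vertex is mixed on $K$ whenever $|K|\ge 2$, and nothing in $\{P_5,\overline{P_5},C_5\}$-freeness prevents this. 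The right response is not to refine the case analysis but to abandon the attempt and instead verify directly that Figure~3 has the claimed properties.
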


\noindent However, \ref{conj'} does not hold, and we give a counterexample in Figure 3. 

The \textit{bull} is a graph with vertex set $\{ x_1, x_2, x_3, y, z \}$ and edge set $\{ x_1x_2, x_2x_3, x_1x_3, x_1y, x_2z \}$. Lastly, applying a result of the first author and Seymour \cite{grow} we give a short proof of \ref{C5'}, and Fouquet's result \cite{C5} on the structure of $\{P_5,\overline{P_5}$,bull$\}$-free graphs.

This paper is organized as follows. Section 2 contains results about the existence of simplicial and antisimplicial vertices in $\{P_5,\overline{P_5}\}$-free graphs. In Section 3 we give a counterexample to the $H_6$-conjecture \ref{h6conj'}, and prove \ref{H6SA-S'}, a slightly weaker version of the conjecture. We also give a simpler proof of \ref{a}, and provide a counterexample to \ref{conj'}. Finally, in Section 4 we give a new proof of \ref{C5'}, and a structure theorem for $\{P_5,\overline{P_5}$,bull$\}$-free graphs.

\section{Simplicial and Antisimplicial vertices}

In this section we prove the following result:

\begin{lemma}\label{SandA} 
All prime $\{P_5,\overline{P_5},C_5\}$-free graphs have both a simplicial vertex, and an antisimplicial vertex.
\end{lemma}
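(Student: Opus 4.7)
My plan is to exploit the self-complementarity of the hypothesis. The class of prime $\{P_5,\overline{P_5},C_5\}$-free graphs is closed under complementation, and a vertex $v$ is simplicial in $G$ if and only if $v$ is antisimplicial in $\overline{G}$. Hence it suffices to show that every such $G$ has a simplicial vertex; the antisimplicial vertex then follows by applying the same statement to $\overline{G}$.

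First I would handle the split case. By \ref{splitt}, if $G$ is a split graph with partition $V(G)=A\cup B$, where $A$ is stable and $B$ is a clique, then any $a\in A$ satisfies $N(a)\subseteq B$ and so is simplicial. Note $A\neq\emptyset$: since $G$ is prime on at least four vertices, $G$ cannot be a complete graph.

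If $G$ is not a split graph, then again by \ref{splitt} together with $C_5$-freeness, $G$ contains an induced $C_4$ or an induced $\overline{C_4}$; complementing if necessary, assume $G$ contains an induced $C_4$ with vertices $a_1a_2a_3a_4$. I would then classify each vertex $v\in V(G)\setminus\{a_1,a_2,a_3,a_4\}$ according to its adjacency pattern on this $C_4$. The combined $P_5$-free, $\overline{P_5}$-free, and $C_5$-free conditions are extremely restrictive on such patterns — for instance, a vertex adjacent to exactly one $a_i$ and non-adjacent to its two neighbors on the $C_4$ would, together with the $C_4$, force either a $P_5$ or a $C_5$ depending on its adjacency to the opposite vertex, so most attachment types are forbidden outright. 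The goal of the case analysis is to show that either some attachment class contains a vertex whose neighborhood is forced to be a clique (yielding a simplicial vertex directly), or else the union of some of these classes is a homogeneous set, contradicting primeness.

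I expect the main obstacle to be precisely this case analysis: one must enumerate the surviving attachment types of external vertices on a fixed induced $C_4$, then show that each "non-simplicial" configuration can be aggregated with others of compatible type into a module, carefully handling vertices that are \emph{mixed} on the $C_4$ (since those are what can block a module from forming among the more uniform vertices). A likely secondary ingredient is to iterate the argument by replacing the initial $C_4$ with other induced $C_4$'s inside the graph to leverage primeness against any remaining mixed vertices. Once the mixed vertices are ruled out or shown to contribute the desired simplicial vertex themselves, the statement follows.
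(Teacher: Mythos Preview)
Your reduction in the first paragraph is valid: to prove the lemma it does suffice to show that every prime $\{P_5,\overline{P_5},C_5\}$-free graph has a simplicial vertex. But you then undermine this reduction in the non-split case when you write ``complementing if necessary, assume $G$ contains an induced $C_4$''. After that complementation you are working with $\overline{G}$, and whatever simplicial vertex your case analysis produces is simplicial in $\overline{G}$, i.e.\ antisimplicial in $G$. So the argument as outlined only yields the weaker statement that $G$ has a simplicial \emph{or} an antisimplicial vertex (this is exactly \ref{AS} in the paper), and you still owe an explanation of how to pass from ``one of the two'' to ``both''. That step is not automatic and is precisely what the paper's \ref{anti} supplies: given an antisimplicial vertex $v$, a non-neighbour $u$ of $v$ minimizing $|N(u)\cap N(v)|$ is forced to be simplicial via \ref{nbrs} and \ref{mixed4}. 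Without something playing the role of \ref{anti}, your plan does not close.

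Separately, the paper does not attack the existence of a simplicial/antisimplicial vertex by classifying attachments to a fixed $C_4$. Instead, \ref{join} grows a maximal family of pairwise-complete anticonnected blocks and shows that either this structure collapses to a single antisimplicial vertex or $G$ admits a $1$-join; then \ref{AS} analyses the $1$-join case directly to find a simplicial vertex. Your $C_4$-attachment scheme may be workable, but the ``iterate by replacing the $C_4$'' step is where it is most likely to stall: mixed vertices on one $C_4$ need not be any better behaved on another, and you have not indicated what invariant improves under the replacement. If you pursue this route, you should identify a concrete extremal choice (analogous to the minimality of $|N(u)\cap N(v)|$ in \ref{anti} or the maximality of $W$ in \ref{join}) that forces termination.
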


Along the way we establish \ref{anti}, a result which is helpful in finding simplicial and antisimplicial vertices in prime $\{P_5,\overline{P_5}\}$-free graphs.

Let $G$ be a graph. We say $G$ is \textit{connected} if $V(G)$ cannot be partitioned into two disjoint sets anticomplete to each other. If $\overline{G}$ is connected we say that $G$ is \textit{anticonnected}. Let $X\subseteq Y\subseteq V(G)$. We say $X$ is a \textit{connected subset of $Y$} if $G[X]$ is connected, and that $X$ is an \textit{anticonnected subset of $Y$} if $G[X]$ is anticonnected. A \textit{component} of $X$ is a maximal connected subset of $X$, and an \textit{anticomponent} of $X$ is a maximal anticonnected subset of $X$. 


First, we make the following three easy observations:

\begin{lemma}\label{prime}

If $G$ is a prime graph, then $G$ is connected and anticonnected.

\end{lemma}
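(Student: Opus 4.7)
The plan is to prove connectedness directly by contradiction, and then deduce anticonnectedness by passing to the complement. The key observation is that primality is self-complementary: a set $X \subseteq V(G)$ is homogeneous in $G$ if and only if it is homogeneous in $\overline{G}$, since the property ``$b$ is mixed on $X$'' does not change under complementation. Combined with $|V(G)| = |V(\overline{G})|$, this tells us that $G$ is prime if and only if $\overline{G}$ is prime.

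For the connectedness half, I would argue the contrapositive: if $G$ is disconnected, then $G$ has a homogeneous set. Suppose $V(G)$ can be partitioned into two non-empty sets $V_1, V_2$ that are anticomplete to each other. If both $V_1$ and $V_2$ are singletons, then $|V(G)| = 2 < 4$, contradicting the definition of prime. Hence, after possibly swapping the two sides, we may assume $|V_1| \geq 2$. Then $1 < |V_1| < |V(G)|$, and every vertex of $V_2 = V(G) \setminus V_1$ is anticomplete (in particular, not mixed) on $V_1$. So $V_1$ is a homogeneous set, contradicting primality.

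For anticonnectedness, I would simply apply the connectedness argument to $\overline{G}$. By the observation above, $\overline{G}$ is also prime, hence connected, which by definition means $G$ is anticonnected.

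There is no real obstacle here; the only subtle point is the edge case where every component of a disconnected $G$ has size one. This is ruled out by the hypothesis $|V(G)| \geq 4$ that is built into the definition of prime, which guarantees at least one side of the partition has two or more vertices and can therefore serve as a homogeneous set.
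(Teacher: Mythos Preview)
Your proof is correct and follows essentially the same approach as the paper: reduce to connectedness by self-complementarity of primality, and if $G$ is disconnected, use $|V(G)|\geq 4$ to locate a part of size at least two that serves as a homogeneous set. The only cosmetic difference is that the paper phrases this via a component $C$ with $|V(G)\setminus C|\geq 2$ rather than an arbitrary anticomplete bipartition.
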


\begin{proof}

Passing to the complement if necessary, we may suppose $G$ is not connected. Since $G$ has at least four vertices, there exists a component $C$ of $V(G)$ such that $|V(G)\setminus C|\geq 2$. However, then $V(G)\setminus C$ is a homogeneous set, a contradiction. This proves \ref{prime}.
\end{proof}


We say a vertex \textit{$v\in V(G)\setminus X$ is mixed on an edge of $X$}, if there exist adjacent $x,y\in X$ such that $v$ is mixed on $\{x,y\}$. Similarly, a vertex \textit{$v\in V(G)\setminus X$ is mixed on a non-edge of $X$}, if there exist non-adjacent $x,y\in X$ such that $v$ is mixed on $\{x,y\}$.

\begin{lemma}\label{mixed1}

Let $G$ be a graph, $X\subseteq V(G)$, and suppose $v\in V(G)\setminus X$ is mixed on $X$.

\begin{enumerate}

\item If $X$ is a connected subset of $V(G)$, then $v$ is mixed on an edge of $X$. 

\item If $X$ is an anticonnected subset of $V(G)$, then $v$ is mixed on a non-edge of $X$.

\end{enumerate}

\end{lemma}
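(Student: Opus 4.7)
The plan is to prove part (1) directly from the definition of a connected induced subgraph, and then obtain part (2) as a formal consequence by passing to the complement.

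For part (1), I would begin by unpacking the hypothesis that $v$ is mixed on $X$: there exist vertices $a,b\in X$ with $v$ adjacent to $a$ and $v$ non-adjacent to $b$. Since $G[X]$ is connected, it contains a path $a=x_0,x_1,\ldots,x_k=b$. I would then walk along this path and consider the sequence of truth values of the statement ``$x_i$ is adjacent to $v$''. This sequence starts true (at $i=0$) and ends false (at $i=k$), so there must be some index $i\in\{0,\ldots,k-1\}$ where it flips, i.e., $x_i$ is adjacent to $v$ and $x_{i+1}$ is not. Since $x_ix_{i+1}$ is an edge of the path, it is an edge of $G[X]$, and by construction $v$ is mixed on $\{x_i,x_{i+1}\}$, giving the required edge of $X$.

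For part (2), the key observation is that the notion ``$v$ is mixed on $X$'' is self-complementary: if $v$ is adjacent to some $x\in X$ and non-adjacent to some $y\in X$ in $G$, then in $\overline{G}$ the roles are merely swapped, so $v$ is still mixed on $X$. By definition, $X$ being anticonnected in $G$ is the same as $X$ being connected in $\overline{G}$. Therefore part (1) applied to $\overline{G}$ yields vertices $x,y\in X$ that are adjacent in $\overline{G}$ (that is, non-adjacent in $G$) on which $v$ is mixed. This pair is a non-edge of $X$ in $G$, as required.

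The argument is essentially a discrete intermediate-value step along a path, so there is no real obstacle. The only point to handle carefully is the reduction in part (2): I must make sure the ambient graph is swapped consistently (edges become non-edges, neighborhoods become non-neighborhoods) so that applying part (1) inside $\overline{G}$ really does produce a non-edge of $G$ on which $v$ is mixed in $G$. Once this bookkeeping is stated once at the start of (2), the rest is immediate from (1).
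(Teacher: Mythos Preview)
Your proposal is correct and follows essentially the same approach as the paper: part (1) exploits connectivity of $G[X]$ to find an edge between $X\cap N(v)$ and $X\setminus N(v)$, and part (2) is obtained by passing to the complement. The only cosmetic difference is that the paper invokes connectivity directly (an edge must cross the partition $X\cap N(v),\,X\setminus N(v)$), whereas you make this explicit by walking along a path and applying a discrete intermediate-value step; the content is the same.
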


\begin{proof}

Suppose $X$ is a connected subset of $V(G)$. Since $v$ is mixed on $X$, both $X\cap N(v)$ and $X\setminus N(v)$ are non-empty. As $G[X]$ is connected, there exists an edge given by $x\in X\cap N(v)$ and $y\in X\setminus N(v)$, and $v$ is mixed on $\{x,y\}$. This proves \ref{mixed1}.1. Passing to the complement, we get \ref{mixed1}.2.

\end{proof}

\begin{lemma}\label{mixed2}

Let $G$ be a graph, $X_1,X_2\subseteq V(G)$ with $X_1\cap X_2=\emptyset$, and $v\in V(G)\setminus (X_1\cup X_2)$.

\begin{enumerate}

\item If $G$ is $P_5$-free, and $X_1,X_2$ are connected subsets of $V(G)$ anticomplete to each other, then $v$ is not mixed on both $X_1$ and $X_2$.

\item If $G$ is $\overline{P_5}$-free, and $X_1,X_2$ are anticonnected subsets of $V(G)$ complete to each other, then $v$ is not mixed on both $X_1$ and $X_2$.

\end{enumerate}

\end{lemma}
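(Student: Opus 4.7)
The plan for part 1 is a direct contradiction argument. Suppose $v$ is mixed on both $X_1$ and $X_2$. Since each $X_i$ is a connected subset of $V(G)$, I would apply \ref{mixed1}.1 to each to produce an edge $x_i y_i$ in $G[X_i]$ with $v$ adjacent to $x_i$ and non-adjacent to $y_i$. The goal is then to show that the five vertices $y_1, x_1, v, x_2, y_2$ induce a copy of $P_5$, contradicting the assumption that $G$ is $P_5$-free.

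To verify this, I would list the required edges and non-edges. The edges $y_1x_1$ and $x_2y_2$ exist by construction inside each $X_i$, and the edges $x_1v$, $vx_2$ come from the choice of $x_i$'s. The non-edges $y_1v$ and $vy_2$ come from the choice of $y_i$'s. Every remaining pair to check, namely $y_1x_2$, $y_1y_2$, $x_1x_2$, and $x_1y_2$, is a non-edge because each has one endpoint in $X_1$ and the other in $X_2$, and $X_1$ is anticomplete to $X_2$. Thus $\{y_1, x_1, v, x_2, y_2\}$ is an induced $P_5$, giving the desired contradiction.

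Part 2 follows immediately by complementation: $G$ is $\overline{P_5}$-free if and only if $\overline{G}$ is $P_5$-free; $X_1$ and $X_2$ are anticonnected and complete to each other in $G$ if and only if they are connected and anticomplete to each other in $\overline{G}$; and $v$ is mixed on $X_i$ in $G$ if and only if $v$ is mixed on $X_i$ in $\overline{G}$. So part 2 is just part 1 applied to $\overline{G}$.

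There is no real obstacle here; the only point requiring care is the bookkeeping of the six pairs that must be non-edges in order for the $P_5$ to be induced, and this is handled uniformly by the anticomplete hypothesis between $X_1$ and $X_2$ together with the defining properties of $x_i$ and $y_i$.
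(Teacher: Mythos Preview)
Your proposal is correct and matches the paper's proof essentially line for line: the paper also applies \ref{mixed1}.1 to each $X_i$ to obtain edges $x_iy_i$ with $v$ adjacent to $x_i$ and non-adjacent to $y_i$, observes that $\{y_1,x_1,v,x_2,y_2\}$ is a $P_5$, and then obtains part~2 by passing to the complement. Your version simply spells out the edge/non-edge bookkeeping in more detail than the paper does.
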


\begin{proof}

Suppose $G$ is $P_5$-free, $X_1,X_2$ are disjoint connected subsets of $V(G)$ anticomplete to each other, and $v$ is mixed on both $X_1$ and $X_2$. By \ref{mixed1}.1, $v$ is mixed on an edge of $X_1$, given by say $x_1,y_1\in X_1$ with $v$ adjacent to $x_1$ and non-adjacent to $y_1$, and an edge of $X_2$, given by say $x_2,y_2\in X_2$ with $v$ adjacent to $x_2$ and non-adjacent to $y_2$. However, then $\{y_1,x_1,v,x_2,y_2\}$ is a $P_5$, a contradiction. This proves \ref{mixed2}.1. Passing to the complement, we get \ref{mixed2}.2.

\end{proof}

As a consequence of \ref{mixed1} and \ref{mixed2} we obtain the following two useful results:

\begin{lemma}\label{mixed3}

Let $u$ and $v$ be non-adjacent vertices in a $\overline{P_5}$-free graph $G$, and let $A$ be an anticonnected subset of $N(u)\cap N(v)$. Then no vertex $w\in V(G)\setminus (A\cup \{u,v\})$ can be mixed on both $A$ and $\{ u,v\}$.

\end{lemma}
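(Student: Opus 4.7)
The plan is to deduce this directly from Lemma \ref{mixed2}.2 by choosing the right pair of anticonnected sets. The key observation is that the hypotheses of \ref{mixed3} are essentially a specialization of the setup in \ref{mixed2}.2: we already have one anticonnected set lying around (namely $A$), and we can manufacture a second anticonnected set from the two non-adjacent vertices $u$ and $v$.

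Concretely, I would set $X_1 = A$ and $X_2 = \{u,v\}$ and verify the four preconditions of \ref{mixed2}.2. First, $X_1 \cap X_2 = \emptyset$, because $A \subseteq N(u) \cap N(v)$ forces $u,v \notin A$ (no vertex is in its own neighborhood). Second, $w \notin X_1 \cup X_2$ by the assumption $w \in V(G) \setminus (A \cup \{u,v\})$. Third, $X_1 = A$ is anticonnected by hypothesis, and $X_2 = \{u,v\}$ is anticonnected because $u,v$ are non-adjacent, so $\{u,v\}$ is an edge in $\overline{G}$. Fourth, $X_1$ is complete to $X_2$: every vertex of $A$ is adjacent to both $u$ and $v$, since $A \subseteq N(u) \cap N(v)$.

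With all preconditions confirmed and $G$ assumed $\overline{P_5}$-free, Lemma \ref{mixed2}.2 immediately yields that $w$ cannot be mixed on both $X_1$ and $X_2$, which is exactly the desired conclusion.

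There is essentially no obstacle here; the only thing to be careful about is the trivial sanity check that $u,v \notin A$ and that a pair of non-adjacent vertices qualifies as an anticonnected set. Once those bookkeeping points are addressed, the statement is just a convenient repackaging of \ref{mixed2}.2 for the recurring situation where one of the two anticonnected sets is a non-edge.
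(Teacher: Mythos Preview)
Your proposal is correct and follows essentially the same approach as the paper: set $X_1 = A$, $X_2 = \{u,v\}$, check that these are disjoint anticonnected subsets complete to each other, and apply Lemma~\ref{mixed2}.2. The paper's proof is just a one-line version of exactly this argument.
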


\begin{proof}

Since $A$ and $\{u,v\}$ are disjoint anticonnected subsets of $V(G)$ complete to each other, \ref{mixed3} follows from \ref{mixed2}.2.

\end{proof}

\begin{lemma}\label{mixed4}

Let $u,v$ and $w$ be three pairwise non-adjacent vertices in a $\{P_5, \overline{P_5}\}$-free graph $G$ such that $w$ is mixed on an anticonnected subset $A$ of $N(u)\cap N(v)$. Then no vertex $z\in N(w)\setminus (A\cup \{u,v\})$ can be mixed on $\{ u,v\}$.

\end{lemma}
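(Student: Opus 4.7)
The plan is to argue by contradiction: assume such a $z$ exists and construct either an induced $P_5$ or an induced $\overline{P_5}$ on five carefully chosen vertices.

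First, I would use \ref{mixed3} to constrain $z$. Applying it to the non-adjacent pair $u,v$ and the anticonnected set $A \subseteq N(u)\cap N(v)$ with the ``third'' vertex $z \in V(G)\setminus(A \cup \{u,v\})$: since $z$ is mixed on $\{u,v\}$, it cannot be mixed on $A$, so $z$ is either complete or anticomplete to $A$. By the $u$--$v$ symmetry we may assume $z$ is adjacent to $u$ and not adjacent to $v$.

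Next, since $w$ is mixed on the anticonnected set $A$, \ref{mixed1}.2 yields non-adjacent $a,b \in A$ with $w$ adjacent to $a$ and non-adjacent to $b$. These four vertices $a,b,v,w,z$ (together with our case hypothesis on $z$ versus $A$) are the vertices I would use.

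The two cases then fall out by explicit verification. If $z$ is anticomplete to $A$, I would check that $b\text{-}v\text{-}a\text{-}w\text{-}z$ is an induced $P_5$: the edges $bv$, $va$, $aw$, $wz$ come from $a,b \in A \subseteq N(v)$, the choice of $a$, and the hypothesis $z \in N(w)$; the non-edges $ba$, $bw$, $bz$, $vw$, $vz$, $az$ come respectively from the choice of $a,b$, hypothesis, $z$ anticomplete to $A$, pairwise non-adjacency of $u,v,w$, the case assumption, and again $z$ anticomplete to $A$. If instead $z$ is complete to $A$, I would check that $a\text{-}b\text{-}w\text{-}v\text{-}z$ is an induced $\overline{P_5}$, i.e.\ that in $G$ the pairs $ab$, $bw$, $wv$, $vz$ are non-edges while $aw$, $av$, $az$, $bv$, $bz$, $wz$ are all edges — each one directly from the choices above together with $z$ complete to $A$.

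There is no real obstacle: the work is just identifying the right five vertices. The only subtlety is making sure to apply \ref{mixed1}.2 first (to get the \emph{non-edge} $ab$ of $A$ on which $w$ is mixed, rather than merely a mixed pair), since without $a \not\sim b$ the final non-edge check in each case breaks down.
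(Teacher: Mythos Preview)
Your proposal is correct and follows essentially the same approach as the paper: apply \ref{mixed3} to force $z$ to be complete or anticomplete to $A$, use \ref{mixed1}.2 to extract a non-edge of $A$ on which $w$ is mixed, and then exhibit an induced $P_5$ or $\overline{P_5}$ in the two cases. The only cosmetic difference is that the paper fixes $z$ adjacent to $v$ (not $u$) and accordingly uses $u$ rather than $v$ in the five-vertex configurations, which by the $u$--$v$ symmetry is the same argument.
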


\begin{proof}
Suppose there exists a vertex $z\in N(w)\setminus (A\cup \{u,v\})$ which is mixed on $\{ u,v\}$, with say $z$ adjacent to $v$ and non-adjacent to $u$. Since $w$ is mixed on $A$, by \ref{mixed1}.2, it follows that $w$ is mixed on a non-edge of $A$, given by say $x,y\in A$ with $w$ adjacent to $x$ and non-adjacent to $y$. By \ref{mixed3}, $z$ is not mixed on $A$. However, if $z$ is anticomplete to $A$, then $\{y,u,x,w,z\}$ is a $P_5$, and if $z$ is complete to $A$, then $\{x,y,w,u,z\}$ is a $\overline{P_5}$, in both cases a contradiction. This proves \ref{mixed4}.

\end{proof}

Now, we can start to prove \ref{SandA}.

\begin{lemma}\label{join}

Let $G$ be a prime $\{P_5,\overline{P_5},C_5\}$-free graph. Then $G$ has an antisimplicial vertex, or admits a 1-join.

\end{lemma}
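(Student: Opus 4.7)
My plan is to argue the contrapositive: assuming $G$ is prime, $\{P_5,\overline{P_5},C_5\}$-free, and has no antisimplicial vertex, I construct a $1$-join. The idea is to locate a maximal anticonnected ``witness'' to the failure of antisimpliciality, and then exploit primeness together with the mixed-vertex lemmas \ref{mixed2}--\ref{mixed4} to carve out the four parts.

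Since no vertex of $G$ is antisimplicial, for every $v\in V(G)$ the set $V(G)\setminus N[v]$ contains a non-edge, hence an anticonnected subset of size at least two. I choose a pair $(v,T)$ maximising $|T|$ among all such witnesses, and partition the remaining vertices as $Q$ (anticomplete to $T$, containing $v$), $C$ (complete to $T$), and $R$ (mixed on $T$). Because $G$ is prime and $1<|T|<|V(G)|$, the set $T$ cannot be homogeneous, so $R\neq\emptyset$.

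Next I extract rigidity from the maximality of $|T|$. For any $w\in Q\cup R$, the set $T\cup\{w\}$ is anticonnected: if $w\in Q$, then $w$ is universal over $T$ in the complement; if $w\in R$, then $w$ has a non-neighbour in the anticonnected set $T$. Maximality therefore forces that no vertex outside $T\cup\{w\}$ can be anticomplete to $T\cup\{w\}$. Applied to $w\in Q$ this gives that $Q$ is a clique, and applied to $w\in R$ it gives that $Q$ is complete to $R$. I then pin down the interaction with $C$: applying \ref{mixed2}.2 with $X_1=T$ and $X_2$ any anticomponent of $C$ shows that every $r\in R$ is complete or anticomplete to each anticomponent of $C$, and a direct five-vertex $P_5$-check on $\{q,r,t_1,t_2,c\}$ (where $\{t_1,t_2\}\subseteq T$ is a non-edge on which $r$ is mixed, via \ref{mixed1}.2) shows that whenever $q\in Q$ and $c\in C$ are non-adjacent, $r$ must be adjacent to $c$ (otherwise $t_2{-}c{-}t_1{-}r{-}q$ induces a $P_5$); the symmetric $\overline{P_5}$-analysis together with \ref{mixed4} then rules out the remaining bad patterns between $C$, $R$, $Q$.

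With this structure in hand the $1$-join is produced by cases. When $C$ splits into two or more anticomponents $C_1,C_2,\ldots$, distinct $C_i$'s are complete to each other in $G$, which furnishes the complete bipartite core of the $1$-join, with $T\cup Q$ absorbed into the two anticomplete parts. When $C$ is anticonnected or empty, the bipartite core is instead the complete join of $Q$ and $R$, and the partition is obtained by splitting $R$ according to its uniform behaviour on the anticomponents of $C$ established above. I expect the main obstacle to lie in this last step: verifying that all four parts of the $1$-join are simultaneously non-empty and that the required complete/anticomplete relations hold. Ruling out the remaining degenerate configurations---such as $C=\emptyset$ together with $|Q|=1$, or a splitting of $R$ that leaves one side empty---will require a further appeal to primeness and to the forbidden-subgraph hypothesis.
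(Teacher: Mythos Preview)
Your setup through ``$Q$ is a clique and $Q$ is complete to $R$'' is correct, and the maximality trick is nice. But from that point on the argument is essentially missing: you announce a case split on the anticomponent structure of $C$ and assert that a $1$-join materialises, without ever exhibiting the four parts, and you yourself flag this as ``the main obstacle.'' Concretely, when $C$ has several anticomponents you say $T\cup Q$ is ``absorbed into the two anticomplete parts'' but do not say where $R$ goes; since distinct $r\in R$ may be complete to different subcollections of the $C_i$, there is no automatic placement. When $C$ is anticonnected or empty you propose to split $R$ by its behaviour on $C$, but you have not controlled $Q$ versus $C$, not shown both halves of $R$ non-empty, and not said where $T$ lands in the $1$-join. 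The phrases ``the symmetric $\overline{P_5}$-analysis together with \ref{mixed4}'' and ``a further appeal to primeness and to the forbidden-subgraph hypothesis'' are placeholders, not arguments; in particular you never invoke $C_5$-freeness, and that hypothesis is genuinely needed.

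The paper goes in the opposite direction: assume no $1$-join and produce an antisimplicial vertex. Rather than maximising an anticonnected set inside one non-neighbourhood, it maximises a set $W=A_1\cup\cdots\cup A_k$ (with $k\ge 2$) of pairwise-complete anticonnected blocks. Then \ref{mixed2}.2 makes every outside vertex mixed on at most one $A_i$; $P_5$-freeness rules out vertices anticomplete to $W$; the mixed classes $B_i$ are pairwise anticomplete; and if two of them were non-empty one reads off a $1$-join immediately. So exactly one $B_i$ survives, primeness forces $k=2$ and $|A_2|=1$, and a short $\{P_5,\overline{P_5},C_5\}$-check shows $B_1$ is stable, making the unique vertex of $A_2$ antisimplicial. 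This direction is cleaner because the target is a single vertex rather than a four-set decomposition; your direction may be salvageable, but the casework you defer is exactly where the content lies.
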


\begin{proof}

Suppose $G$ does not admit a 1-join. Let $W$ be a maximal subset of vertices that has a partition $A_1\cup ...\cup A_k$ with $k\geq2$ such that:

\begin{itemize} 

\item $A_1,...,A_k$ are all anticonnected subsets of $V(G)$, and

\item $A_1,...,A_k$ are pairwise complete to each other.

\end{itemize}

\bigskip

\noindent\textit{(1) $V(G)\setminus W$ is non-empty.}

\bigskip

\noindent By \ref{prime}, $G$ is anticonnected, which implies that $V(G)\setminus W$ is non-empty. This proves (1).

\bigskip

\noindent\textit{(2) Every $v\in V(G)\setminus W$ is either anticomplete to or mixed on $A_i$ for each $i\in \{1,...,k\}$.}

\bigskip

\noindent Suppose $v\in V(G)\setminus W$ is complete to some $A_i$. Take $B$ to be the union of all the $A_j$ to which $v$ is complete. However, since $\{v\}\cup W\backslash B$ is anticonnected and complete to $B$, it follows that $W'=B\cup(\{v\}\cup W\backslash B)$ contradicts the maximality of $W$. This prove (2).

\bigskip

\noindent\textit{(3) If for some $i\in \{1,...,k\}$, $v\in V(G)\setminus W$ is mixed on $A_i$, then $v$ is anticomplete to $W\backslash A_i$.}

\bigskip

\noindent By \ref{mixed2}.2, any $v\in V(G)\setminus W$ is mixed on at most one $A_i$, and so together with (2) this proves (3).

\bigskip

\noindent\textit{(4) Every vertex in $V(G)\setminus W$ is mixed on exactly one $A_i$, for some $i\in \{1,...,k\}$.}

\bigskip

\noindent Suppose not. Let $X\subseteq V(G)\setminus W$ be the set of vertices anticomplete to $W$, which is non-empty by (2) and (3). By \ref{prime}, $G$ is connected, and so there exists an edge given by $v\in X$ and $u\in V(G)\setminus (X\cup W)$. By (2), $u$ is mixed on some $A_i$, and so, by \ref{mixed1}.2, $u$ is mixed on a non-edge of $A_i$, given by say $x_i,y_i\in A_i$ with $u$ adjacent to $x_i$ and non-adjacent to $y_i$. However, by (3), $u$ is anticomplete to $W\setminus A_i$, and so for $j\ne i$ and a vertex $z\in A_j$ we get that $\{v,u,x_i,z,y_i\}$ is a $P_5$, a contradiction. This proves (4).

\bigskip

\noindent And so, by (3) and (4), we can partition $V(G)=A_1\cup ...\cup A_k\cup B_1\cup ...\cup B_k$, where each $B_i$ is the set of vertices mixed on $A_i$ and anticomplete to $(A_1\cup...\cup A_k)\setminus A_i$.

\bigskip

\noindent\textit{(5) $B_1,...,B_k$ are pairwise anticomplete.}

\bigskip

\noindent Suppose for $i\ne j$, $b_i \in B_i$ is adjacent to $b_j \in B_j$. By \ref{mixed1}.2, $b_i$ is mixed on a non-edge of $A_i$, given by say $x_i,y_i\in A_i$ with $b_i$ adjacent to $x_i$ and non-adjacent to $y_i$. As $b_j$ is mixed on $A_j$, there exists $x_j\in A_j$ non-adjacent to $b_j$, however then $\{b_j,b_i,x_i,x_j,y_i\}$ is a $P_5$, a contradiction. This proves (5).

\bigskip

\noindent\textit{(6) Exactly one $B_i$ is non-empty.}

\bigskip

\noindent By (1) and (4), at least one $B_i$ is non-empty. Suppose for $i\ne j$, $B_i$ and $B_j$ are both non-empty. Then, by (5), $A=B_i$, $B=A_i$, $C=(A_1\cup ...\cup A_k)\backslash A_i$ and $D=(B_1\cup ...\cup B_k)\backslash B_i$ is a 1-join, a contradiction. This proves (6).

\bigskip

\noindent Hence, by (6), we may assume $B_1$ is non-empty while $B_2,\dots,B_k$ are all empty.

\bigskip

\noindent\textit{(7) $k=2$ and $|A_2|=1$.}

\bigskip

\noindent Since $A_2\cup ...\cup A_k$ is not a homogeneous set, (6) implies that $k=2$ and $|A_2|=1$. This proves (7).

\bigskip

\noindent Let $a$ be the vertex in $A_2$.

\bigskip

\noindent\textit{(8) $B_1$ is a stable set.}

\bigskip

\noindent Suppose not. Then there exists a component $B$ of $B_1$ with $|B|>1$. Since $a$ is anticomplete to $B_1$, and $B$ is a component of $B_1$, as $G$ is prime, it follows that there exist $a_1\in A_1$ which is mixed on $B$. Thus, by \ref{mixed1}.1, $a_1$ is mixed on an edge of $B$, given by say $b,b'\in B$ with $a_1$ adjacent to $b$ and non-adjacent to $b'$. Next, partition $A_1=C\cup D$ with $C=A_1\cap (N(b)\setminus N(b'))$ and $D=A_1\setminus C$, where both $C$ and $D$ are non-empty, as $a_1\in C$ and $b'$ is mixed on $A_1$. Since $A_1$ is anticonnected there exists a non-edge given by $c\in C$ and $d\in D$. However, since $d\in D$, it follows that $\{d,a,c,b,b'\}$ is either a $P_5,\overline{P_5}$ or $C_5$, a contradiction. This proves (8).

\bigskip

\noindent Thus, by (8), $a$ is an antisimplicial vertex. This proves \ref{join}.

\end{proof}

Next, we observe:

\begin{lemma}\label{nbrs}

Let $u$ and $v$ be non-adjacent vertices in a prime $\overline{P_5}$-free graph $G$. Then either

\begin{itemize}

\item $N(u)\cap N(v)$ is a clique, or
\item there exists a vertex $w\in V(G)\setminus(N(u)\cup N(v)\cup \{u,v\})$ which is mixed on an anticonnected subset of $N(u)\cap N(v)$.

\end{itemize}

\end{lemma}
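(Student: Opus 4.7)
The plan is to set $M = N(u)\cap N(v)$ and argue by contradiction: assume $M$ is not a clique and produce the required vertex $w$ directly from primality, using \ref{mixed3} to force $w$ outside $N(u)\cup N(v)$.

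First I would pick an anticomponent $A$ of $M$ with $|A|\ge 2$. Such an anticomponent exists because $M$ is not a clique, so some two vertices of $M$ are non-adjacent and therefore lie in a common anticomponent of size at least two. By definition $A$ is an anticonnected subset of $N(u)\cap N(v)$, so it is a legitimate candidate for the anticonnected set in the conclusion.

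Next I would invoke primality of $G$ to find a vertex of $V(G)\setminus A$ mixed on $A$. Since $u,v\notin A$ we have $|A|<|V(G)|$, and $|A|\ge 2$, so $A$ meets the size requirements to be a homogeneous set; since $G$ is prime, it is not one, so some $z\in V(G)\setminus A$ is mixed on $A$. I then want to locate $z$ precisely. The vertices $u$ and $v$ are complete to $A$ (since $A\subseteq M$), so $z\notin\{u,v\}$. Any vertex of $M\setminus A$ lies in a different anticomponent of $M$ and is therefore complete to $A$, so $z\notin M\setminus A$ either. Hence $z\in V(G)\setminus(M\cup\{u,v\})$.

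Finally, I would apply \ref{mixed3} to $A$ and $z$: since $A$ is an anticonnected subset of $N(u)\cap N(v)$ and $z\in V(G)\setminus(A\cup\{u,v\})$ is mixed on $A$, the vertex $z$ cannot be mixed on $\{u,v\}$. Being complete to $\{u,v\}$ would put $z$ in $M$, which we just excluded, so $z$ must be anticomplete to $\{u,v\}$. Therefore $z\in V(G)\setminus(N(u)\cup N(v)\cup\{u,v\})$ and is mixed on the anticonnected set $A\subseteq N(u)\cap N(v)$, giving the desired $w=z$. There is no serious obstacle here; the only thing to be careful about is verifying that $A$ is not swallowed by $V(G)$ so that primality really applies, and that the anticomponent $A$ has size at least two (which is exactly the content of $M$ not being a clique).
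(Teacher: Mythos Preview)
Your proof is correct and follows essentially the same approach as the paper: pick an anticomponent $A$ of $N(u)\cap N(v)$ with $|A|\ge 2$, use primality to find a vertex mixed on $A$ outside $(N(u)\cap N(v))\cup\{u,v\}$, and then apply \ref{mixed3} to conclude it is anticomplete to $\{u,v\}$. You are simply more explicit than the paper about why the mixed vertex cannot lie in $M\setminus A$ or be complete to $\{u,v\}$.
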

\begin{proof}

Suppose $N(u)\cap N(v)$ is a not clique. Then there exists an anticomponent $A$ of $N(u)\cap N(v)$ with $|A|>1$. Since $\{u,v\}$ is complete to $N(u)\cap N(v)$, and $A$ is a anticomponent of $N(u)\cap N(v)$, as $G$ is prime, it follows that there exists $w\in V(G)\setminus((N(u)\cap N(v)) \cup \{u,v\})$ which is mixed on $A$. Thus, by \ref{mixed3}, $w$ is not mixed on $\{u,v\}$, and so $w$ is anticomplete to $\{u,v\}$. This proves \ref{nbrs}.

\end{proof}

A useful consequence of~\ref{nbrs} is the following:

\begin{lemma}

Let $v$ be a vertex in a prime $\{P_5,\overline{P_5}\}$-free graph $G$.

\begin{enumerate}\label{anti}

\item If $v$ is antisimplicial, and we choose $u$ non-adjacent to $v$ such that $|N(u)\cap N(v)|$ is minimum, then $u$ is a simplicial vertex. 

\item If $v$ is simplicial, and we choose $u$ adjacent to $v$ such that $|N(u)\cup N(v)|$ is maximum, then $u$ is an antisimplicial vertex.

\end{enumerate}

\end{lemma}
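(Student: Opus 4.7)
The plan is to prove part (1); part (2) then follows by passing to the complement, since $\{P_5,\overline{P_5}\}$-freeness and primality are self-complementary, simplicial and antisimplicial vertices interchange under complementation, and the quantity $|N(u)\cup N(v)|$ in $G$ equals $|V(G)|-|\overline{N}(u)\cap\overline{N}(v)|$, so the maximum condition in (2) translates to the minimum condition in (1).

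To prove (1), fix an antisimplicial vertex $v$ and a non-neighbor $u$ of $v$ chosen so that $|N(u)\cap N(v)|$ is minimum. Since $V(G)\setminus N(v)$ is a stable set, every neighbor of $u$ must lie in $N(v)$, so $N(u)=N(u)\cap N(v)$. Suppose, for the sake of contradiction, that $N(u)$ is not a clique. Then by \ref{nbrs} applied to the non-adjacent pair $u,v$, there exists a vertex $w\in V(G)\setminus(N(u)\cup N(v)\cup\{u,v\})$ mixed on some anticonnected subset $A\subseteq N(u)\cap N(v)=N(u)$. Since $w\notin N(v)$, the stability of $V(G)\setminus N(v)$ gives that $u,v,w$ are pairwise non-adjacent and that $N(w)\subseteq N(v)$.

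The key step is to now apply \ref{mixed4} to the triple $u,v,w$ together with the anticonnected set $A\subseteq N(u)\cap N(v)$. Let $z\in N(w)\setminus A$; since $u,v\notin N(w)$, this is the same as $N(w)\setminus(A\cup\{u,v\})$. Then $z\in N(v)$ already, so $z$ is either in $N(u)\cap N(v)$ or mixed on $\{u,v\}$; the latter is forbidden by \ref{mixed4}, whence $z\in N(u)$. Combined with $A\subseteq N(u)$, this shows $N(w)\subseteq N(u)$. Moreover, the non-neighbor $b\in A$ of $w$ witnesses $b\in N(u)\setminus N(w)$, so the inclusion is strict: $|N(w)|<|N(u)|$. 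Because $N(w)\subseteq N(v)$ and $N(u)\subseteq N(v)$, this reads $|N(w)\cap N(v)|<|N(u)\cap N(v)|$, contradicting the minimality in the choice of $u$.

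The main obstacle, such as it is, lies in making sure the hypotheses of \ref{mixed4} are genuinely met—namely that $u,v,w$ are pairwise non-adjacent and $A\subseteq N(u)\cap N(v)$ is anticonnected—both of which are handed to us by \ref{nbrs} together with the antisimpliciality of $v$. The rest is bookkeeping: translating the conclusion of \ref{mixed4} into the containment $N(w)\subseteq N(u)$ and then exploiting the mixed vertex in $A$ to obtain strict containment.
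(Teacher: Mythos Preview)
Your proof is correct and follows essentially the same approach as the paper: both use antisimpliciality of $v$ to get $N(u)\subseteq N(v)$, apply \ref{nbrs} to produce $w$, and combine \ref{mixed4} with the minimality of $|N(u)\cap N(v)|$ to reach a contradiction. The only cosmetic difference is the direction of the final step---the paper uses minimality to exhibit a vertex $z\in (N(w)\cap N(v))\setminus N(u)$ that directly violates \ref{mixed4}, whereas you use \ref{mixed4} to force $N(w)\subsetneq N(u)$ and then contradict minimality---but these are two phrasings of the same argument.
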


\begin{proof}

Suppose $v$ is antisimplicial, we choose $u$ non-adjacent to $v$ such that $|N(u)\cap N(v)|$ is minimum, and $u$ is not simplicial. Since $v$ is antisimplicial, it follows that $N(u)\setminus N(v)$ is empty, and thus, as $u$ is not simplicial, $N(u)\cap N(v)$ is not a clique. Hence, by \ref{nbrs}, there exists some $w$, non-adjacent to both $u$ and $v$, which is mixed on an anticonnected subset of $N(u)\cap N(v)$. However, then, by our choice of $u$, there exists a vertex $z\in N(v)\backslash N(u)$ adjacent to $w$, contradicting \ref{mixed4}. This proves \ref{anti}.1. Passing to the complement, we get \ref{anti}.2.

\end{proof}

\begin{lemma}\label{AS} 
Let $G$ be a prime $\{P_5,\overline{P_5},C_5\}$-free graph. Then $G$ has a simplicial vertex, or an antisimplicial vertex.
\end{lemma}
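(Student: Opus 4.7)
The plan is to apply Lemma~\ref{join} to both $G$ and its complement $\overline{G}$. The class of prime $\{P_5,\overline{P_5},C_5\}$-free graphs is closed under complementation: $\overline{C_5}$ is isomorphic to $C_5$, the pair $\{P_5,\overline{P_5}\}$ is self-complementary, and primeness is self-dual (a set is homogeneous in $G$ if and only if it is homogeneous in $\overline{G}$). Hence \ref{join} applied to $\overline{G}$ yields either an antisimplicial vertex of $\overline{G}$---which is precisely a simplicial vertex of $G$---or a 1-join of $\overline{G}$. Combined with \ref{join} applied to $G$ itself, we are done unless $G$ admits a 1-join $(A,B,C,D)$ and $\overline{G}$ admits a 1-join $(A',B',C',D')$.

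To finish, I would rule out this remaining configuration by analyzing how the two partitions interact. For a vertex $v$ in an intersection of parts---say $v\in A\cap A'$---the first 1-join forces $v$ to be anticomplete in $G$ to $C\cup D$, while the second 1-join (read in $G$) forces $v$ to be complete in $G$ to $C'\cup D'$, so $(C\cup D)\cap(C'\cup D')=\emptyset$. Iterating such observations across every non-empty intersection aligns the two partitions rigidly, severely constraining which blocks of the second partition can meet which blocks of the first. One can then select representatives from carefully chosen blocks and, using the mixed-vertex machinery of \ref{mixed1}--\ref{mixed4} (in the spirit of step~(8) of the proof of \ref{join}), produce a forbidden induced subgraph---a $P_5$, $\overline{P_5}$, or $C_5$---or directly exhibit a simplicial or antisimplicial vertex.

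The main obstacle is this last case analysis, where the interplay between the two 1-joins has to be examined systematically across the sixteen possible intersections of the two partitions, and a forbidden subgraph (or a desired vertex) exhibited in each surviving configuration. The cleaner versions of the argument I anticipate pin down that at most a few of these intersections are non-empty---by repeating the complete/anticomplete disjointness observation above---after which the priming of $G$ and the $\{P_5,\overline{P_5},C_5\}$-free property leave only a handful of small structures to rule out by hand.
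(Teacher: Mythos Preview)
Your opening move---applying \ref{join} to both $G$ and $\overline{G}$ and noting the class is self-complementary---is correct and indeed disposes of every case except when both $G$ and $\overline{G}$ admit a 1-join. But from there your plan becomes unnecessarily complicated, and you leave the actual work (the sixteen-intersection analysis) undone.

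The paper takes a much more direct route that avoids this case analysis entirely. It applies \ref{join} only once, to $G$. If $G$ has an antisimplicial vertex, we are done; otherwise $G$ admits a 1-join $(A,B,C,D)$, and the paper extracts a simplicial vertex \emph{directly from this single 1-join}, with no reference to a 1-join in $\overline{G}$. The argument is short:
\begin{itemize}
\item First, $A$ (and symmetrically $D$) is a stable set: if some component $A'$ of $A$ has $|A'|>1$, primeness gives $b\in B$ mixed on an edge $aa'$ of $A'$, and any edge $cd$ with $c\in C$, $d\in D$ yields the $P_5$ $\{a,a',b,c,d\}$.
\item Since $A$ is stable and anticomplete to $C\cup D$, every $a\in A$ has $N(a)\subseteq B$. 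Fix any $c\in C$ and choose $a\in A$ with $|N(a)\cap N(c)|=|N(a)|$ minimum. If $N(a)$ is not a clique, \ref{nbrs} gives $w$ non-adjacent to $a,c$ and mixed on an anticomponent of $N(a)$; since $B$ is complete to $C$ and anticomplete to $D$, necessarily $w\in A$. Minimality of $a$ then produces $z\in N(w)\cap N(c)\setminus N(a)$, contradicting \ref{mixed4}.
\end{itemize}
So $a$ is simplicial.

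The insight you are missing is that one 1-join already carries enough structure to pin down a simplicial vertex via the same minimality trick used in \ref{anti}; there is no need to overlay a second 1-join from the complement. Your approach could in principle be pushed through, but it is strictly harder than what is required.
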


\begin{proof}
Suppose $G$ does not have an antisimpicial vertex. Then, by~\ref{join}, it admits a 1-join $(A,B,C,D)$. 

\bigskip

\noindent\textit{(1) $A$ and $D$ are stable sets.}

\bigskip

\noindent By symmetry, it suffices to argue that $A$ is a stable set. Suppose not. Then there exists a component $A'$ of $A$ with $|A'|>1$. Since $C\cup D$ is anticomplete to $A$, and $A'$ is a component of $A$, as $G$ is prime, it follows that there exists $b\in B$ which is mixed on $A'$. Thus, by \ref{mixed1}.1, $b$ is mixed on an edge of $A'$, given by say $a,a'\in A'$ with $b$ adjacent to $a'$ and non-adjacent to $a$. By \ref{prime}, $G$ is connected, and so there exists an edge given by $c\in C$ and $d\in D$. However, then $\{a,a',b,c,d\}$ is a $P_5$, a contradiction. This proves (1).

\bigskip

\noindent Next, fix some $c\in C$, and choose a vertex $a\in A$ such that $|N(a)\cap N(c)|$ is minimum.

\bigskip

\noindent\textit{(2) $a$ is a simplicial vertex.}

\bigskip

\noindent Suppose not. Then, by (1), $N(a)\cap N(c)=N(a)\subseteq B$ is not a clique, and so, by \ref{nbrs}, there exists $w$, non-adjacent to both $a$ and $c$, which is mixed on an anticonnected subset of $N(a)\cap N(c)$. Since $B$ is complete to $C$ and anticomplete to $D$, it follows that $w$ belongs to $A$. However, then, by our choice of $a$, there exists a vertex $z \in N(c)\backslash N(a)$ adjacent to $w$, contradicting \ref{mixed4}. This proves (2).

\bigskip

\noindent This completes the proof of \ref{AS}.
\end{proof}

Putting things together we can now prove \ref{SandA}.

\begin{proof}[Proof of $\ref{SandA}$]
By~\ref{AS}, passing to the complement if necessary, there exists an antisimplicial vertex $a$. And so, by \ref{anti}.1, if we choose $s$ non-adjacent to $a$ such that $|N(a)\cap N(s)|$ is minimum, then $s$ is simplicial. This proves \ref{SandA}.
\end{proof}

\section{The $H_6$-Conjecture}

In this section we give a counterexample to the $H_6$-conjecture \ref{h6conj'}, and prove \ref{H6SA-S'}, a slightly weaker version of the conjecture. We also give a proof of \ref{a}, and provide a counterexample to \ref{conj'}. 

We begin by establishing some properties of prime graphs. Recall the following theorem of Seinsche \cite{P33}:

\begin{lemma}\label{P3free}

If $G$ is a $P_4$-free graph with at least two vertices, then $G$ is either not connected or not anticonnected.

\end{lemma}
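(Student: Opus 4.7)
The plan is to prove the contrapositive by induction on $n = |V(G)|$: if $G$ is $P_4$-free and has at least two vertices, and if \emph{both} $G$ and $\overline{G}$ are connected, we derive a contradiction. The base case $n = 2$ is immediate, since the only two-vertex graphs are $K_2$ and $\overline{K_2}$, and one of each pair is disconnected.

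For the inductive step, fix any $v \in V(G)$ and apply the induction hypothesis to $G \setminus \{v\}$. Since $\overline{P_4} = P_4$, the class of $P_4$-free graphs is closed under complementation, so by passing to $\overline{G}$ if necessary we may assume $G \setminus \{v\}$ is disconnected. Let $C_1, \ldots, C_k$ with $k \geq 2$ be its connected components. Because $G$ itself is connected, $v$ must have at least one neighbor in every $C_i$, for otherwise the union of those $C_j$ containing no neighbor of $v$ would be disconnected from the rest of $G$.

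The key step is to show that $v$ is in fact \emph{complete} to every $C_i$. Suppose not: some $C_i$ contains a non-neighbor of $v$, and since $v$ also has a neighbor in $C_i$ and $G[C_i]$ is connected, we can find an edge $xy$ inside $C_i$ with $x$ adjacent to $v$ and $y$ non-adjacent to $v$ (by walking along a path in $C_i$ from a neighbor of $v$ to a non-neighbor and taking the first such switch). Picking any $c \in C_j$ for some $j \neq i$, we have $c \sim v$, while $c$ is non-adjacent to both $x$ and $y$ (they lie in different components of $G \setminus \{v\}$); together with $x \sim y$, $x \sim v$, $y \not\sim v$, this means $\{y, x, v, c\}$ induces a $P_4$, a contradiction.

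Hence $v$ is adjacent to every other vertex of $G$, so $v$ is isolated in $\overline{G}$, contradicting the connectedness of $\overline{G}$. The only subtle point is the key step identifying the induced $P_4$, and this is handled by the usual trick of walking along an edge inside $C_i$ where $v$'s adjacency switches, together with a witness from another component; no heavy machinery is needed.
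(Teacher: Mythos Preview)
Your argument is correct, modulo one small slip. When you write ``Picking any $c \in C_j$ for some $j \neq i$, we have $c \sim v$'', you have not yet established that $v$ is complete to $C_j$; that is exactly the claim you are in the middle of proving. All you know at this point is that $v$ has at least one neighbour in each component. The fix is trivial: choose $c$ to be a neighbour of $v$ in $C_j$ (which exists by the previous paragraph), and the induced $P_4$ on $\{y,x,v,c\}$ works as written.

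As for comparison with the paper: there is nothing to compare. The paper does not prove this lemma; it is quoted as Seinsche's theorem with a citation and no argument. Your inductive proof is the standard one.
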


Together, \ref{prime} and \ref{P3free} imply the following:

\begin{lemma}\label{p1}

Every prime graph contains $P_4$.

\end{lemma}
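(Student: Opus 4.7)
The plan is to chain together the two lemmas that immediately precede the statement. By definition of primeness, $G$ has at least four vertices, hence certainly at least two, so the hypothesis of Lemma \ref{P3free} about the number of vertices is satisfied. I would argue by contradiction: suppose $G$ is prime but contains no induced $P_4$.

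By Lemma \ref{prime}, since $G$ is prime it is both connected and anticonnected. On the other hand, Lemma \ref{P3free} applied to the assumed $P_4$-free graph $G$ (which has at least two vertices) yields that $G$ is either not connected or not anticonnected. These two conclusions contradict each other, so $G$ must contain an induced $P_4$.

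Since the argument is really just a two-line contrapositive application of Seinsche's theorem combined with the fact that prime graphs are connected and anticonnected, there is no genuine obstacle here; the only thing to watch is that the hypothesis ``at least two vertices'' in Lemma \ref{P3free} is needed, which is automatic from the definition of prime (at least four vertices). No separate case analysis or construction is required.
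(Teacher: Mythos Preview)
Your argument is correct and matches the paper's approach exactly: the paper simply states that the lemma follows from \ref{prime} and \ref{P3free}, which is precisely the contrapositive chaining you carry out. There is nothing to add.
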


Next, as first shown by Ho\`{a}ng and Khouzam \cite{brittle}, we observe that:

\begin{lemma}\label{simp} Let $G$ be a prime graph.

\begin{enumerate}

\item A vertex $v\in V(G)$ is simplicial if and only if $v$ is a degree one vertex in every copy of $P_4$ in $G$ containing it.

\item A vertex $v \in V(G)$ is antisimplicial if and only if $v$ is a degree two vertex in every copy of $P_4$ in $G$ containing it.

\end{enumerate}
\end{lemma}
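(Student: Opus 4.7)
The plan is to prove part 1 directly and deduce part 2 by complementation: since $P_4 = \overline{P_4}$, the same $4$-vertex subsets induce $P_4$ in $G$ and in $\overline{G}$, and a degree-$1$ vertex in one is a degree-$2$ vertex in the other; together with the fact that $v$ is simplicial in $G$ if and only if $v$ is antisimplicial in $\overline{G}$, part 2 becomes part 1 applied to $\overline{G}$ (which is still prime).

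For part 1, the forward implication is immediate: if $v$ were a degree-$2$ vertex of some induced $P_4$, its two path-neighbours would be non-adjacent vertices of $N(v)$, contradicting that $N(v)$ is a clique. For the converse, suppose $v$ is a degree-$1$ vertex in every induced $P_4$ containing it, and assume for contradiction that $N(v)$ is not a clique. Fix non-adjacent $a,b \in N(v)$, so that $a - v - b$ is an induced $P_3$. A $P_4$ extending this $P_3$ with $v$ in the interior must have the form $x - a - v - b$ or $a - v - b - x$, and in either case $x$ must lie in $T := V(G) \setminus (N(v) \cup \{v\})$ and be mixed on $\{a,b\}$. The hypothesis rules out any such $x$; applying the same observation to every non-edge of $N(v)$ yields the key constraint that \emph{no vertex of $T$ is mixed on any non-edge of $N(v)$}.

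The contradiction comes from exhibiting a homogeneous set. Since $N(v)$ is not a clique, it has an anticomponent $C$ with $|C| \geq 2$. The vertex $v$ is complete to $C$, and every $y \in N(v) \setminus C$ is complete to $C$ by the definition of anticomponent. Moreover, any $x \in T$ mixed on $C$ would, by \ref{mixed1}.2 applied to the anticonnected set $C$, be mixed on some non-edge of $C \subseteq N(v)$, violating the key constraint; hence each $x \in T$ is either complete or anticomplete to $C$. Thus no vertex outside $C$ is mixed on $C$, and since $1 < |C| \leq |N(v)| < |V(G)|$, the set $C$ is a homogeneous set of $G$, contradicting primality.

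The main obstacle, I expect, is the conceptual step of not trying to extend a single chosen $P_3$ of the form $a - v - b$, but instead aggregating the non-extension conditions across \emph{all} non-edges of $N(v)$ simultaneously. This aggregation is exactly what promotes pointwise constraints into the uniform statement that each $T$-vertex is complete or anticomplete to every anticomponent of $N(v)$, which is the right structural object to globalise into a homogeneous set.
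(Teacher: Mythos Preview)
Your proof is correct and uses the same ingredients as the paper's: the anticomponent $C$ of $N(v)$, Lemma~\ref{mixed1}.2, and the observation that a $T$-vertex mixed on a non-edge $\{x,y\}$ of $N(v)$ yields the $P_4$ $y\text{--}v\text{--}x\text{--}u$ with $v$ interior. The only difference is organisation: the paper argues the contrapositive in one stroke---since $G$ is prime, $C$ is not homogeneous, so some $u\in V(G)\setminus(N(v)\cup\{v\})$ is mixed on $C$, hence (by \ref{mixed1}.2) on a non-edge of $C$, yielding the forbidden $P_4$ directly---whereas you first extract the ``key constraint'' and then build the homogeneous set from it. Your closing remark about needing to aggregate over \emph{all} non-edges of $N(v)$ therefore overstates the difficulty: the paper's ordering shows that a single application of primality and \ref{mixed1}.2 already produces the one non-edge you need.
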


\begin{proof}

Both forward implications are clear. To prove the converse of \ref{simp}.1, suppose there exists a vertex $v$ which is not simplicial and yet is a degree one vertex in every copy of $P_4$ in $G$ containing it. Then there exists an anticomponent $A$ of $N(v)$ with $|A|>1$. Since $v$ is complete to $A$, and $A$ is a anticomponent of $N(v)$, as $G$ is prime, it follows that there exists $u\in V(G)\setminus (N(v)\cup \{v\})$ which is mixed on $A$. Thus, by \ref{mixed1}.2, $u$ is mixed on a non-edge of $A$, given by say $x,y\in A$ with $u$ adjacent to $x$ and non-adjacent to $y$. However, then $\{y,v,x,u\}$ is a $P_4$ with $v$ having degree two, a contradiction. This proves \ref{simp}.1.  Passing to the complement, we get \ref{simp}.2.

\end{proof}

Finally, we observe that: 

\begin{lemma}\label{Aclique} Let $G$ be a prime graph.

\begin{enumerate}

\item The set of antisimplicial vertices in $G$ is a clique.

\item The set of simplicial vertices in $G$ is a stable set.

\end{enumerate}
\end{lemma}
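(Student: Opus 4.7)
The plan is to derive part 2 directly from the definition of \emph{simplicial} and \emph{prime}, and then obtain part 1 by passing to the complement. Observe that a vertex $v$ is antisimplicial in $G$ if and only if $v$ is simplicial in $\overline{G}$, and $G$ is prime if and only if $\overline{G}$ is prime; so part 1 will follow by applying part 2 to $\overline{G}$, and it suffices to prove part 2.

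For part 2 I would argue by contradiction. Suppose $u$ and $v$ are distinct simplicial vertices of $G$ with $uv \in E(G)$. Since $u$ is simplicial, $N(u)$ is a clique, and it contains $v$; therefore every vertex in $N(u)\setminus\{v\}$ is adjacent to $v$, which gives the inclusion
\[
N(u)\setminus\{v\} \;\subseteq\; N(v)\setminus\{u\}.
\]
Swapping the roles of $u$ and $v$ (using that $v$ is simplicial and $u \in N(v)$) yields the reverse inclusion, hence $N(u)\setminus\{v\} = N(v)\setminus\{u\}$.

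Now I claim $\{u,v\}$ is a homogeneous set in $G$, which contradicts primality. Indeed, for any $w \in V(G)\setminus\{u,v\}$, the equality $N(u)\setminus\{v\} = N(v)\setminus\{u\}$ shows that $w$ is adjacent to $u$ if and only if $w$ is adjacent to $v$, so no vertex outside $\{u,v\}$ is mixed on $\{u,v\}$. Since $G$ is prime we have $|V(G)|\ge 4$, so $1 < |\{u,v\}| < |V(G)|$, and $\{u,v\}$ is a bona fide homogeneous set, contradicting the hypothesis that $G$ is prime. This proves part 2, and part 1 follows by the complementation remark above.

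There is no real obstacle here: the argument is essentially a bookkeeping step that turns a pair of adjacent simplicial vertices into a pair of twins, and twins in a graph on at least four vertices always form a homogeneous set. The only point that requires a moment of care is noting that the primality hypothesis forces $|V(G)|\ge 4$, so that $\{u,v\}$ satisfies the size constraints in the definition of a homogeneous set.
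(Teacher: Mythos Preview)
Your proof is correct and follows essentially the same approach as the paper's: both show that two ``bad'' vertices (adjacent simplicial, or non-adjacent antisimplicial) are twins and hence form a homogeneous set, contradicting primality. The only cosmetic difference is that the paper proves part~1 directly and deduces part~2 by complementation, whereas you do the reverse.
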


\begin{proof}

Suppose there exist non-adjacent antisimplicial vertices $a,a'\in V(G)$. Since $a$ is antisimplicial, it follows that $N(a')\setminus N(a)$ is empty. Similarly, $N(a)\setminus N(a')$ is also empty. However, this implies that $\{a,a'\}$ is a homogeneous set in $G$, a contradiction. This proves \ref{Aclique}.1. Passing to the complement, we get \ref{Aclique}.2. 

\end{proof}

\begin{lemma}\label{split}

Let $G$ be a prime $\{P_5,\overline{P_5},C_5\}$-free graph. Let $A$ be the set of antisimplicial vertices in $G$, and let $S$ be the set of simplicial vertices in $G$. Then $G[A\cup S]$ is a split graph which is both connected and anticonnected.
\end{lemma}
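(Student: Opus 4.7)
The plan is to verify the three properties in order: being a split graph, being connected, and being anticonnected. By Lemma~\ref{SandA} both $A$ and $S$ are nonempty, which is what makes the connectedness statements nonvacuous.

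First, the split-graph property is essentially immediate from Lemma~\ref{Aclique}: $A$ is a clique and $S$ is a stable set, so taking $A$ as the clique side and $S\setminus A$ as the stable side gives a valid split partition of $V(G[A\cup S])$. (There is no need to analyze whether $A\cap S$ is empty.)

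Second, for connectedness I would argue by contradiction. Suppose $G[A\cup S]$ is disconnected, and let $X$ be the component of $G[A\cup S]$ that contains $A$; this is well defined since $A$ is a nonempty clique. Then $Y := (A\cup S) \setminus X$ is nonempty, and since $Y$ is disjoint from $X\supseteq A$ and has no $G$-edges to $X$, every vertex of $Y$ is a simplicial vertex of $G$ with no neighbor in $A$. Pick any $s\in Y$. By Lemma~\ref{prime} the graph $G$ is connected, so $s$ has some neighbor in $G$, and therefore Lemma~\ref{anti}.2 applies to $s$, yielding a vertex $u$ adjacent to $s$ that is antisimplicial in $G$; but then $u\in A$ is adjacent to $s\in Y$, a contradiction.

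Third, for anticonnectedness I would complement and replay the same argument. Since $\overline{C_5}=C_5$, the graph $\overline{G}$ is again a prime $\{P_5,\overline{P_5},C_5\}$-free graph, and complementation swaps the two vertex classes: in $\overline{G}$ the simplicial vertices form $A$ and the antisimplicial vertices form $S$. Applying the connectedness step to $\overline{G}$ shows that $\overline{G}[A\cup S] = \overline{G[A\cup S]}$ is connected, which is exactly to say that $G[A\cup S]$ is anticonnected. There is no real obstacle in the argument: everything is driven by Lemma~\ref{anti}, which is precisely the tool that manufactures the edge (or non-edge) between $A$ and $S$ needed to rule out a disconnection.
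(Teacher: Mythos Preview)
Your proof is correct and uses essentially the same approach as the paper: both arguments rely on Lemma~\ref{anti} to guarantee that every simplicial vertex has an antisimplicial neighbor and every antisimplicial vertex has a simplicial non-neighbor. The paper simply states these two consequences of~\ref{anti} directly and reads off connectedness and anticonnectedness, whereas you wrap the same step in a contradiction argument and then complement; the content is identical.
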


\begin{proof}
\ref{Aclique} implies that $G[A\cup S]$ is a split graph, where $A$ is a clique and $S$ is a stable set. By \ref{anti}.1, every vertex in $A$ has a non-neighbor in $S$, and, by \ref{anti}.2, every vertex in $S$ has a neighbor in $A$. Thus, $G[A\cup S]$ is both connected and anticonnected. This proves \ref{split}.
 
\end{proof}

We are finally ready to give a proof of \ref{a}, first shown in \cite{MS} by Hayward and Nastos.

\begin{lemma}\label{SAAS}

If $G$ is a prime $\{P_5,\overline{P_5},C_5\}$-free graph, then there exists a copy of $P_4$ in $G$ whose vertices of degree one are simplicial, and whose vertices of degree two are antisimplicial.
\end{lemma}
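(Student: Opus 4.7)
The plan is to apply the structural result \ref{split} directly to the induced subgraph on the union of simplicial and antisimplicial vertices, and then extract the required $P_4$ using the $P_4$-existence criterion \ref{P3free}. Let $A$ denote the set of antisimplicial vertices and $S$ the set of simplicial vertices of $G$. By \ref{SandA} both $A$ and $S$ are nonempty, so $|A\cup S|\geq 2$, and by \ref{split} the graph $G[A\cup S]$ is a split graph (with $A$ a clique and $S$ a stable set) that is both connected and anticonnected.

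Applying the contrapositive of \ref{P3free} to $G[A\cup S]$, we obtain an induced copy of $P_4$, say with vertices $v_1,v_2,v_3,v_4$ listed in order. The key step is then to verify that any such $P_4$ in $G[A\cup S]$ automatically has the required shape, namely $v_1,v_4\in S$ and $v_2,v_3\in A$. Since the $v_i$ will then be simplicial or antisimplicial in $G$ by definition of $A$ and $S$, this gives exactly the $P_4$ in the statement.

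To carry out this shape check, suppose some endpoint, say $v_1$, lies in $A$. Since $A$ is a clique, any other $P_4$-vertex belonging to $A$ would be adjacent to $v_1$; as $v_1$ is adjacent only to $v_2$ in the $P_4$, this forces $v_3,v_4\in S$. But then $v_3v_4$ is an edge inside the stable set $S$, a contradiction. Hence $v_1\in S$, and symmetrically $v_4\in S$. Finally, $v_2$ is adjacent to $v_1\in S$ and $v_3$ to $v_4\in S$, so neither can lie in the stable set $S$; therefore $v_2,v_3\in A$.

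The argument is essentially just this case check, since the real content has already been packaged into \ref{SandA}, \ref{split}, and \ref{P3free}. There is no serious obstacle: the only thing to be careful about is invoking \ref{P3free} in contrapositive form and remembering that $A\cup S$ has at least two vertices so that the hypothesis applies.
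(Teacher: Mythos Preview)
Your proof is correct and follows essentially the same route as the paper: both invoke \ref{SandA} to get $A,S$ nonempty, apply \ref{split} and \ref{P3free} to obtain a $P_4$ inside $G[A\cup S]$, and then use that $A$ is a clique and $S$ a stable set to force the endpoints into $S$ and the middle vertices into $A$. The only difference is that you spell out the shape check explicitly, whereas the paper simply asserts that every $P_4$ in $G[A\cup S]$ has the desired form (citing \ref{Aclique} rather than \ref{split} for the clique/stable-set structure).
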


\begin{proof}

Let $A$ be the set of antisimplicial vertices in $G$, and let $S$ be the set of simplicial vertices in $G$. By \ref{SandA}, both $A$ and $S$ are non-empty. Hence, $G[A\cup S]$ is a graph with at least two vertices, which, by \ref{split}, is both connected and anticonnected, and so, by \ref{P3free}, it follows that $G[A\cup S]$ contains $P_4$. Since \ref{Aclique} implies that $A$ is a clique and $S$ is a stable set, it follows that every copy of $P_4$ in $G[A\cup S]$ is of the desired form. This proves \ref{SAAS}.

\end{proof}

Next, we turn our attention to the $H_6$-conjecture. A result of Ho\`{a}ng and Reed \cite{C4} implies the following:

\begin{theorem}\label{b}

If $G$ is a prime $\{P_5,\overline{P_5},C_5\}$-free graph which is not split, then $G$ or $\overline{G}$ contains $H_6$.  

\end{theorem}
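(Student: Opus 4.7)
The plan is to leverage the split graph characterization of Földes and Hammer (\ref{splitt}) to reduce the problem to the case where $G$ contains an induced $C_4$, and then to appeal to the cited result of Ho\`{a}ng and Reed \cite{C4} to upgrade that $C_4$ to a full $H_6$. Concretely, since $G$ is prime, $\{P_5,\overline{P_5},C_5\}$-free, and not split, \ref{splitt} forces $G$ to contain either $C_4$ or $\overline{C_4}$; by passing to the complement if necessary (and noting that the hypothesis and conclusion are both self-complementary, since $\overline{H_6}$ in $G$ corresponds to $H_6$ in $\overline{G}$), I may assume $G$ contains an induced copy of $C_4$. The Ho\`{a}ng--Reed theorem then supplies the structural input needed to extend this $C_4$ to an induced $H_6$ inside $G$.

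If one wanted to give a self-contained argument (in case the invocation of \cite{C4} needs unpacking), the approach would be to fix an induced $C_4 = \{c_1,c_2,c_3,c_4\}$ with edges $c_ic_{i+1}$ (indices mod $4$) and classify, for each $v \in V(G) \setminus C$, the possible intersections $N(v) \cap C$. The key observation is that $H_6$ is precisely a $C_4$ with two non-adjacent pendants attached to two adjacent vertices of the cycle, so it suffices to produce vertices $p,q \notin C$ and an index $i$ such that $N(p) \cap C = \{c_i\}$, $N(q) \cap C = \{c_{i+1}\}$, and $pq$ is not an edge. Primality of $G$ guarantees mixed attachments on $C$, and the forbidden induced subgraphs $P_5$, $\overline{P_5}$, $C_5$ rule out most exotic attachment patterns (for instance, a vertex with $N(v)\cap C = \{c_1,c_3\}$ together with an appropriate neighbour produces a forbidden configuration), so the pendant-type attachments must occur.

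The main obstacle in a direct proof is the bookkeeping required to guarantee, under primality, that the two pendant vertices can simultaneously be chosen on adjacent vertices of $C$ and can be chosen non-adjacent to each other; producing a single pendant is easy, but ensuring the pair avoids all the bad configurations and lies on consecutive rather than opposite vertices of $C$ needs the fuller structural analysis from \cite{C4}. Accordingly, the cleanest route is the two-step one above: reduce to the $C_4$ case via \ref{splitt}, then cite Ho\`{a}ng--Reed to finish.
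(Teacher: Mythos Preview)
The paper does not give its own proof of this statement; it simply attributes it to Ho\`{a}ng and Reed \cite{C4}, and your proposal ultimately does the same, invoking \ref{splitt} to reduce to the case of an induced $C_4$ and then deferring to the Ho\`{a}ng--Reed result. Your self-contained sketch is reasonable but, as you yourself note, incomplete without the bookkeeping from \cite{C4}, so in substance your approach coincides with the paper's.
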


In hopes of saying more along these lines, motivated by \ref{SAAS} and \ref{b}, Hayward and Nastos posed \ref{h6conj'}, which we restate:

\begin{theorem}[The $H_6$-Conjecture]\label{h6conj}
If $G$ is a prime $\{P_5,\overline{P_5},C_5\}$-free graph which is not split, then there exists a copy of $H_6$ in $G$ or $\overline{G}$ whose two vertices of degree one are simplicial, and whose two vertices of degree three are antisimplicial.
\end{theorem}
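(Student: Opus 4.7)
The natural plan is to extend the $P_4$ produced by \ref{SAAS} into an $H_6$. Write that $P_4$ as $s_1 - a_1 - a_2 - s_2$, with $s_1, s_2$ simplicial and $a_1, a_2$ antisimplicial. The task reduces to locating, in $G$ or $\overline{G}$, two further vertices $v_5, v_6$ such that $v_5 a_1, v_6 a_2, v_5 v_6$ are edges while $v_5 a_2, v_6 a_1, v_5 s_1, v_5 s_2, v_6 s_1, v_6 s_2$ are all non-edges.

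A free reduction comes first. Because $a_2$ is antisimplicial, $V(G)\setminus N(a_2)$ is a stable set containing both $s_1$ and any candidate $v_5 \in N(a_1)\setminus N(a_2)$, so the non-edge $v_5 s_1$ is automatic. Symmetrically, $v_6 s_2$ is automatic. Hence one only needs to find $v_5 \in N(a_1)\setminus N(a_2)$ and $v_6 \in N(a_2)\setminus N(a_1)$, adjacent to each other, with $v_5$ not adjacent to $s_2$ and $v_6$ not adjacent to $s_1$.

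For existence of the pair, I would invoke the hypothesis that $G$ is not split: by \ref{splitt} and the $C_5$-freeness, $G$ or $\overline{G}$ contains $C_4$, and, complementing if necessary, \ref{b} places an $H_6$ somewhere in the graph. Primality forces $\{a_1,a_2\}$ not to be a homogeneous set, producing vertices mixed on this pair, and by combining \ref{mixed2}, \ref{mixed3}, and \ref{mixed4} one would try to show that the anticomponent structure of $N(a_1)\triangle N(a_2)$ must harbor an edge $v_5v_6$ of the required kind. The hope is that the edge $a_1a_2$, the non-edges $s_1 a_2$ and $a_1 s_2$, and the $C_4$ guaranteed in $G$ can be threaded together to build the missing pair of arms $v_5, v_6$.

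The main obstacle I expect is exactly the simultaneous avoidance of adjacency to $s_2$ for $v_5$ and to $s_1$ for $v_6$. The $P_4$ output by \ref{SAAS} is forced to live inside $G[A\cup S]$, but any $H_6$ obtained globally from \ref{b} need not sit over this particular $P_4$; to make the construction work one would either need to reselect $s_1$ and $s_2$ after choosing $v_5, v_6$, or argue that every compatible $P_4$ of the split-graph $G[A\cup S]$ admits matching $v_5, v_6$. This inflexibility is where I would expect the argument to break down, and indeed the paper announces a counterexample at precisely this point, which is why the authors retreat to the weaker statement \ref{H6SA-S'} that asks only for one of the two degree-three vertices to be antisimplicial.
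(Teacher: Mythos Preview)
The statement you are asked to address is the $H_6$-Conjecture, and the paper does not prove it: it \emph{refutes} it. The paper's treatment consists of exhibiting a self-complementary $12$-vertex graph (Figure~2) that is prime, $\{P_5,\overline{P_5},C_5\}$-free, and not split, in which the only antisimplicial vertices are $\{2,3\}$ and the only simplicial vertices are $\{1,4\}$; since no copy of $C_4$ in that graph contains both $2$ and $3$, no $H_6$ of the required form exists in $G$ (and by self-complementarity, none in $\overline{G}$ either).

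Your write-up is therefore not a proof attempt in the usual sense but an exploratory sketch, and to your credit it lands in the right place: you correctly isolate the obstruction (forcing $v_5$ to miss $s_2$ and $v_6$ to miss $s_1$ simultaneously, while being tied to the particular $P_4$ inside $G[A\cup S]$), and you correctly note that this is exactly where the paper gives up on \ref{h6conj} and falls back to \ref{H6SA-S'}. What your proposal lacks, if it is to replace the paper's handling of \ref{h6conj}, is the actual counterexample or at least its punchline: one must produce a concrete prime $\{P_5,\overline{P_5},C_5\}$-free non-split graph whose antisimplicial clique does not sit inside any $C_4$. Absent that, the discussion of why a proof strategy ``would break down'' is suggestive but not a disproof.
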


\begin{figure}\label{B}
\begin{center}
  \begin{tikzpicture}
   \GraphInit[vstyle=Classic]

   \tikzset{VertexStyle/.style = {shape = circle,fill = black,minimum size = 5pt,inner sep=0pt}}

     \Vertex[x=0 ,y=0, Lpos=270]{1}
     \Vertex[x=2, y=0, Lpos=270]{2} 
     \Vertex[x=4, y=0, Lpos=270]{3} 
     \Vertex[x=6, y=0, Lpos=270]{4}

	 \Vertex[x=0 ,y=2, Lpos=180]{8}
     \Vertex[x=2, y=2, Lpos=90]{7} 
     \Vertex[x=4, y=2, Lpos=90]{6} 
     \Vertex[x=6, y=2]{5}

     \Vertex[x=0 ,y=4, Lpos=90]{12}
     \Vertex[x=2, y=4, Lpos=90]{11} 
     \Vertex[x=4, y=4, Lpos=90]{10} 
     \Vertex[x=6, y=4, Lpos=90]{9}

	 \Edges(1,2,3,4)
     \Edges(8,7,6,5)
     \Edges(12,11,10,9)
     \Edges(12,8,11,6,9,5,10,7,12)
     \Edges(1,7,3,5)
     \Edges(8,2,6,4)
     \Edges(2,7)
     \Edges(3,6)
  \end{tikzpicture}
  \caption{Counterexample to the $H_6$-conjecture, where additionally $\{ 2,3\}$ is complete to $\{ 9,10,11,12\}$.}
\end{center}
\end{figure}
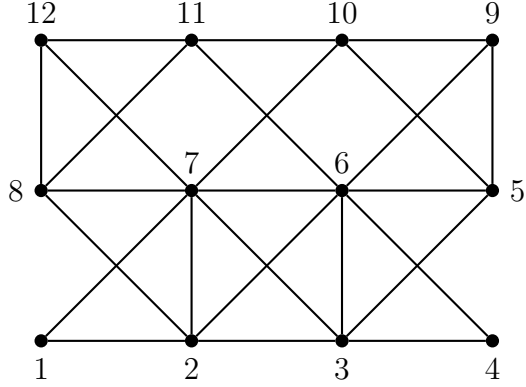

In Figure 2 we give a counterexample to \ref{h6conj}. The graph $G$ in Figure 2 contains $C_4$, and so, by \ref{splitt}, is not split. The mapping $\phi :V(G)\rightarrow V(\overline{G})$

$$\phi:=\left(\begin{array}{cccccccccccc}1 & 2 & 3 & 4 & 5 & 6 & 7 & 8 & 9 & 10 & 11 & 12 \\3 & 1 & 4 & 2 & 7 & 5 & 8 & 6 & 11 & 9 & 12 & 10\end{array}\right)$$

\noindent is an isomorphism between $G$ and $\overline{G}$. Thus, as $G$ is self-complementary, it suffices to check that $G$ is $P_5$-free, which is straight forward, as is verifying that $G$ is prime, and we leave the details to the reader. The set of simplicial vertices in $G$ is $\{ 1,4\}$, and the set of antisimplicial vertices in $G$ is $\{ 2,3\}$. However, no copy of $C_4$ in $G$ contains $\{2,3\}$, and so there does not exist a copy of $H_6$ of the desired form.

However, all is not lost as we can prove \ref{H6SA-S'}, a slightly weaker version of the $H_6$-conjecture, which we restate:

\begin{lemma}\label{H6SA-S}
If $G$ is a prime $\{P_5,\overline{P_5},C_5\}$-free graph which is not split, then there exists a copy of $H_6$ in $G$ or $\overline{G}$ whose two vertices of degree one are simplicial, and at least one of whose vertices of degree three is antisimplicial.
\end{lemma}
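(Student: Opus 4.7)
The plan is to begin with the $P_4$ produced by~\ref{SAAS}, say $v_1v_2v_3v_4$ with $v_1,v_4$ simplicial and $v_2,v_3$ antisimplicial, and try to extend it to an $H_6$ by finding two more vertices $v_5\in N(v_2)\setminus N(v_3)$ and $v_6\in N(v_3)\setminus N(v_2)$ with $v_5v_6$ an edge of $G$. I would first verify that, once $v_5,v_6$ are chosen from these sets with $v_5\ne v_1$ and $v_6\ne v_4$, all the remaining non-edges required by $H_6$ come for free: $v_4\not\sim v_5$ because $N(v_4)$ is a clique containing $v_3$; $v_1\not\sim v_6$ because $N(v_1)$ is a clique containing $v_2$; $v_1\not\sim v_5$ because $v_3$ antisimplicial makes $V(G)\setminus(N(v_3)\cup\{v_3\})$ a stable set containing both $v_1$ and $v_5$; and $v_4\not\sim v_6$ analogously by antisimpliciality of $v_2$. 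The non-edges $v_1v_3$, $v_1v_4$, $v_2v_4$ are already present in the $P_4$ and in \ref{Aclique}.

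The task thus reduces to finding an edge $v_5v_6$ in $G$ with $v_5\in X:=(N(v_2)\setminus N(v_3))\setminus\{v_1\}$ and $v_6\in Y:=(N(v_3)\setminus N(v_2))\setminus\{v_4\}$. Since $G$ is not split, \ref{splitt} together with $C_5$-freeness gives that $G$ or $\overline{G}$ contains a $C_4$, and by the complement-symmetry of the statement we may assume $G$ does. Using such a $C_4$ together with the mixed-vertex tools~\ref{mixed1}--\ref{mixed4}, I would locate an edge in $X\times Y$. Whenever this succeeds, the resulting $H_6$ even satisfies the original $H_6$-conjecture~\ref{h6conj}, with both middle vertices $v_2,v_3$ antisimplicial.

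The genuine obstruction, shown by the counterexample in Figure~2, is that $X$ and $Y$ may be anticomplete in $G$, in which case we must give up antisimpliciality of $v_3$. I would then replace $v_3$ with some other neighbor $v_3'$ of $v_2$ with $v_3'\not\sim v_1$, together with a simplicial pendant $v_4'\sim v_3'$ satisfying $v_4'\not\sim v_2$ (the existence of $v_3'$ being forced by primality once the sets $X,Y$ are too small, and the existence of $v_4'$ coming from \ref{anti}.2 applied to simplicial vertices in $N(v_3')$). The non-edges $v_1\not\sim v_5$ and $v_4'\not\sim v_6$ lose their automatic justifications, but I would reimpose them as membership restrictions $v_5\in N(v_2)\setminus(N(v_3')\cup N(v_1))$ and $v_6\in N(v_3')\setminus(N(v_2)\cup N(v_4'))$; the remaining non-edges of $H_6$ remain automatic from $v_2$ being antisimplicial and $v_1,v_4'$ being simplicial. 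The main obstacle I anticipate is showing that for some choice of $v_3'$ the required edge between these revised sets does exist; I would try to derive this from the anticompleteness of the original $X,Y$ via~\ref{mixed2} (in the complement) and primality of $G$, which together should force enough bipartite structure around $v_2$ to produce the needed edge.
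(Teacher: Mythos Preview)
Your opening reduction is correct and clean: once $v_5\in N(v_2)\setminus N(v_3)$, $v_6\in N(v_3)\setminus N(v_2)$, $v_5\ne v_1$, $v_6\ne v_4$, and $v_5v_6\in E(G)$, all nine non-edges of $H_6$ are forced by simpliciality of $v_1,v_4$ and antisimpliciality of $v_2,v_3$, exactly as you say. So the whole problem is to produce that single edge, or to work around its absence.

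The gaps are in both of your two cases. In the first, ``use a $C_4$ together with \ref{mixed1}--\ref{mixed4} to locate an edge in $X\times Y$'' is not an argument: the $C_4$ guaranteed by \ref{splitt} lives somewhere in $G$, with no a priori relation to $X=(N(v_2)\setminus N(v_3))\setminus\{v_1\}$ or $Y=(N(v_3)\setminus N(v_2))\setminus\{v_4\}$. The counterexample of Figure~2 has $X=\{8\}$, $Y=\{5\}$ with $5\not\sim 8$, so there is no way to turn this into a proof without already moving to your second case.

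In the second case the plan is too vague to be a proof, and one concrete step is wrong. You invoke \ref{anti}.2 to produce a simplicial pendant $v_4'$, but \ref{anti}.2 goes the other direction: it starts from a simplicial vertex and outputs an antisimplicial neighbour. Neither \ref{anti}.1 nor \ref{anti}.2 manufactures a simplicial vertex adjacent to an arbitrary $v_3'$; indeed in Figure~2 the only simplicial vertices are $1$ and $4$, so once $v_1=1$ is fixed your $v_4'$ is forced to be $4$, which pins $v_3'\in N(4)=\{3,6\}$. You give no mechanism that would, in general, single out a workable $v_3'$, nor any argument that the revised $v_5,v_6$ exist with $v_5v_6\in E(G)$; ``derive this from anticompleteness of the original $X,Y$ via \ref{mixed2} in the complement'' does not point to a specific deduction.

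The paper's proof proceeds quite differently. Rather than anchoring on one $P_4$ and trying to extend it, it takes \emph{maximal} sets $A$ of antisimplicial and $S$ of simplicial vertices (containing the $P_4$ from \ref{SAAS}) subject to every $a\in A$ having a neighbour in $S$ and every $s\in S$ a non-neighbour in $A$, then partitions $V(G)=A\cup S\cup B\cup C\cup D$ according to adjacencies to $A$ and $S$. If $D$ is not stable, an edge $xy$ in $D$ together with their distinguishing antisimplicial neighbours $a_x,a_y$ and simplicial pendants $s_x,s_y$ already gives an $H_6$ with \emph{both} degree-three vertices antisimplicial. Otherwise one chooses $d\in D$ with $|N(d)|$ minimum, applies \ref{nbrs} to $d$ and a non-neighbour $a_d\in A$ to get a vertex $w$ mixed on a non-edge $x,y$ of $N(d)$, and uses \ref{mixed4} with the minimality of $d$ to force $w\in S$; then $\{s_d,a_d,x,w,y,d\}$ is the desired $H_6$ with $a_d$ antisimplicial and $s_d,w$ simplicial. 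The extremal choice of $d$ is what replaces your missing mechanism for selecting $v_3',v_4'$ and guaranteeing the final edge.
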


\begin{proof} By~\ref{SAAS}, there exist simplicial vertices $s,s'$, and antisimplicial vertices $a,a'$ such that $\{s,a,a',s'\}$ is a $P_4$ in $G$. Now, choose maximal subsets $A$ of antisimplicial vertices in $G$, and $S$ of simplicial vertices in $G$ such that $a,a'\in A$, $s,s'\in S$, every vertex in $A$ has a neighbor in $S$, and every vertex in $S$ has a non-neighbor in $A$. 

\bigskip

\noindent\textit{(1) Any graph containing a vertex which is both simplicial and antisimplicial is split.}

\bigskip

\noindent By definition, if a vertex $v\in V(G)$ is both simplicial and antisimplicial, then $N(v)$ is a clique and $V(G)\setminus N(v)$ is a stable set. This proves (1).

\bigskip

\noindent\textit{(2) There exists no vertex $v\in V(G)\setminus (A\cup S)$ adjacent to a vertex $u\in S$ and non-adjacent to a vertex $w\in A$.}

\bigskip

\noindent Suppose not. If $u$ is adjacent to $w$, then $N(u)$ is not a clique, and if $u$ is non-adjacent to $w$, then $V(G)\setminus N(w)$ is not a stable set, in both cases a contradiction. This proves (2).

\bigskip

\noindent By (1) and (2), we can partition $V(G)=A\cup S\cup B\cup C\cup D$, where $B$ is the set of vertices complete to $A$ and anticomplete to $S$, $C$ is the set of vertices complete to $A$ with a neighbor in $S$, and $D$ is the set of vertices anticomplete to $S$ with a non-neighbor in $A$. Recall \ref{Aclique} implies that $A$ is a clique and $S$ is a stable set.
 
\bigskip

\noindent\textit{(3) No vertex of $C\cup D$ is simplicial or antisimplicial.}

\bigskip

\noindent Consider a vertex $c\in C$. Then there exists $s_c\in S$ adjacent to $c$. Hence, $c$ is not antisimplicial, as otherwise we could add $c$ to $A$ contrary to maximality. By construction, $s_c$ has a non-neighbor $a_c\in A$. Since $c$ is complete to the $A$, it follows that $N(c)$ is not a clique, and thus $c$ is not simplicial. Hence, $C$ contains no simplicial or antisimplicial vertices. Passing to the complement, we get that no vertex in $D$ is simplicial or antisimplicial. This proves (3).

\bigskip

\noindent\textit{(4) We may assume that $C$ is a clique, and $D$ is a stable set.}

\bigskip

\noindent By symmetry, it is enough to argue that if $D$ is not a stable set, then the theorem holds. Suppose we have an edge given by $x,y\in D$. By definition, any antisimplicial vertex is adjacent to at least one of $x$ and $y$. And so, as $x$ and $y$ both have non-neighbors in $A$, there exists $a_x,a_y\in A$ such that $a_x$ is adjacent to $x$ and non-adjacent to $y$, and $a_y$ is adjacent to $y$ and non-adjacent to $x$. Since $S$ is anticomplete to $D$, it follows that $a_x$ and $a_y$ do not have a common neighbor $s''\in S$, as otherwise $\{ a_x,y,s'',x,a_y \}$ is a $\overline{P_5}$. By construction, every vertex in $A$ has a neighbor in $S$, and so there exists $s_x\in S$ adjacent to $a_x$ and non-adjacent to $a_y$, and $s_y\in S$ adjacent to $a_y$ and non-adjacent to $a_x$. However, then $\{ s_x,a_x,a_y,s_y,x,y \}$ is a copy of $H_6$ in $G$ of the desired form. Passing to the complement, we may also assume that $C$ is a clique. This proves (4).

\bigskip

\noindent\textit{(5) For all $d\in D$ and $u\in A$, $N(d)\subseteq N(u)\cup \{u\}$.}

\bigskip

\noindent By (4), $A\cup C$ is a clique and $D\cup S$ is a stable set. Thus, for any $d\in D$, it follows that $N(d)\subseteq A\cup B\cup C$. Since $A$ is complete to $B$, it follows that any $a\in A$ is complete to $(A\setminus \{a\})\cup B \cup C$. This proves (5).

\bigskip

\noindent\textit{(6) We may assume both $C$ and $D$ are empty.}

\bigskip

\noindent By symmetry, it is enough to argue that if $D$ is non-empty, then the theorem holds. Suppose $D$ is non-empty, and choose $d\in D$ with $|N(d)|$ minimum. Then there exists $a_d\in A$ non-adjacent to $d$. By (3) and (5), $N(a_d)\cap N(d)=N(d)$ is not a clique, and so, by \ref{nbrs}, there exists a vertex $w$, non-adjacent to both $a_d$ and $d$, which is mixed on an anticonnected subset of $N(d)$. Since $a_d$ is complete to $(A\setminus \{a_d\})\cup B\cup C$, it follows that $w\in D\cup S$. If $w\in D$, then, by our choice of $d$, there exists $z\in N(w)\setminus N(d)$ which, by (5), is adjacent to $a_d$, contradicting \ref{mixed4}. Hence, $w\in S$. Since $w$ is mixed on an anticonnected subset of $N(d)$, by \ref{mixed1}.2, $w$ is mixed on a non-edge of $N(d)$, given by say $x,y\in N(d)$ with $w$ adjacent to $x$ and non-adjacent to $y$. Since $A\cup C$ is a clique, and $B$ is complete to $A$ and anticomplete to $S$, it follows that $x\in C$ and $y\in B$. By construction, every vertex in $A$ has a neighbor in $S$, and so there exists $s_d \in S$ adjacent to $a_d$. Since $s_d$ is mixed on $\{a_d,d\}$ and non-adjacent to $y$, \ref{mixed3} implies that $s_d$ is anticomplete to $\{x,y\}$. However, then $\{ s_d,a_d,x,w,y,d \}$ is a copy of $H_6$ in $G$ of the desired form. Passing to the complement, we may also assume that $C$ is empty. This proves (6).

\bigskip

\noindent By (6), since $G$ is prime, it follows that $|B|\leq 1$, implying that $G$ is a split graph, a contradiction. This proves \ref{H6SA-S}.

\end{proof}

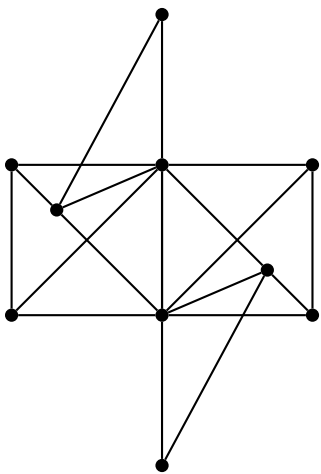
\begin{figure}\label{C}
\begin{center}
  \begin{tikzpicture}
   \GraphInit[vstyle=Simple]

   \tikzset{VertexStyle/.style = {shape = circle,fill = black,minimum size = 5pt,inner sep=0pt}}

     \Vertex[x=0 ,y=0, Lpos=270]{3}
     \Vertex[x=2, y=0, Lpos=225]{6} 
     \Vertex[x=4, y=0, Lpos=270]{4} 

	 \Vertex[x=0 ,y=2, Lpos=90]{1}
     \Vertex[x=2, y=2, Lpos=45]{5} 
     \Vertex[x=4, y=2, Lpos=90]{2} 
     \Vertex[x=2, y=4, Lpos=90]{7} 
     \Vertex[x=2, y=-2, Lpos=270]{8} 

     \Vertex[x=.6, y=1.4, Lpos=90]{9} 
     \Vertex[x=3.4, y=.6, Lpos=90]{10}

	 \Edges(1,5,2,4,6,3,1)
	 \Edges(3,5,6,2)
	 \Edges(8,6,5,7)
	 \Edges(1,9,7)
	 \Edges(5,9,6)
	 \Edges(4,10,8)
	 \Edges(5,10,6)

  \end{tikzpicture}
\end{center}
  \caption{Counterexample to Conjecture~\ref{conj}.}
\end{figure}

Another conjecture which seemed plausible for a while is \ref{conj'}, which we restate:

\begin{theorem}
\label{conj}
If $G$ is a $\{P_5,\overline{P_5}\}$-free graph, then either

\begin{itemize}
\item $G$ is isomorphic to $C_5$, or
\item $G$ is a split graph, or
\item $G$ has a homogeneous set, or
\item $G$ or $\overline{G}$ admits a 1-join.
\end{itemize}
\end{theorem}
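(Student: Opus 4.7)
The plan is to reduce $G$ to a well-structured prime graph and then try to read off a 1-join from the simplicial/antisimplicial partition developed in Section 2 and in the proof of \ref{H6SA-S}. If $G$ is not prime, it has a homogeneous set and we are done. If $G$ contains $C_5$, then \ref{C5'} gives $G\cong C_5$ or a homogeneous set. So assume $G$ is a prime $\{P_5,\overline{P_5},C_5\}$-free graph, and assume it is not split. The goal is to show $G$ or $\overline{G}$ admits a 1-join.

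Apply \ref{join} to $G$ and to $\overline{G}$: either one of $G,\overline{G}$ already admits a 1-join, or $G$ has both a simplicial and an antisimplicial vertex. Assume the latter. Now invoke the partition $V(G)=A\cup S\cup B\cup C\cup D$ built inside the proof of \ref{H6SA-S}, where $A$ is a maximal clique of antisimplicial vertices, $S$ a stable set of simplicial vertices, $B$ is complete to $A$ and anticomplete to $S$, $C$ is complete to $A$ and has a neighbor in $S$, and $D$ is anticomplete to $S$ and has a non-neighbor in $A$. Since $G$ is not split, the argument in step (6) of that proof forces at least one of $C,D$ to be non-empty (otherwise $|B|\le 1$ makes $G$ split).

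With this partition in hand, the goal is to regroup into four non-empty classes $(A',B',C',D')$ forming a 1-join in $G$ or in $\overline{G}$. The natural candidate in $G$ is $A'=S$, $B'=A$, $C'=B\cup C$, $D'=D$, using that $S$ is anticomplete to $C\cup D$ by construction and $A$ is complete to $B\cup C$; it would then remain to verify that $A$ is anticomplete to $D$ and that $B$ does not mix on $D$ or on $C$. Dually, in $\overline{G}$ one swaps the roles of $A,S$ and of $C,D$.

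The main obstacle is that the extra adjacencies demanded by the 1-join candidate are \emph{not} forced by $\{P_5,\overline{P_5}\}$-freeness alone. A vertex of $D$ can have neighbors inside $A$ (only the existence of a single non-neighbor is guaranteed), a vertex of $C$ can miss part of $S$, and vertices of $B$ can mix on $C\cup D$; results like \ref{mixed2} and \ref{mixed4} constrain these mixings locally but do not appear strong enough to simultaneously rule them all out in both $G$ and $\overline{G}$. I would therefore expect the proposed reduction to hit a wall precisely at the interaction between $B$, $C$ and $D$, and I would search there for a counterexample — which, given the authors' announcement of Figure 3, is exactly where the conjecture breaks down.
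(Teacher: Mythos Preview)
The statement is \emph{false}, so there is no proof to compare against: the paper's entire treatment of \ref{conj} is the counterexample drawn in Figure~3 (a ten-vertex self-complementary graph found with Seymour), together with the remark that it contains both $C_4$ and $\overline{C_4}$ and hence is not split.

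Your write-up is not a proof attempt so much as a post-mortem, and as such it is accurate. The reduction to prime $\{P_5,\overline{P_5},C_5\}$-free non-split graphs is correct, and your instinct that the $A\cup S\cup B\cup C\cup D$ partition from the proof of \ref{H6SA-S} is the natural place to look for a 1-join is sound. You are also right that the obstruction lies in the uncontrolled adjacencies between $B$, $C$, and $D$: in the Figure~3 graph one can check that the two antisimplicial vertices and the two simplicial vertices do not separate the remaining six vertices into blocks with the clean complete/anticomplete pattern a 1-join requires, in $G$ or in $\overline{G}$. What your proposal lacks is the actual counterexample; the paper supplies it but does not derive it from the failed proof strategy, so if you want a self-contained refutation you must exhibit the graph and verify (i) it is $\{P_5,\overline{P_5}\}$-free, (ii) prime, (iii) not split, and (iv) neither it nor its complement admits a 1-join.
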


However, with Paul Seymour we found the counterexample in Figure 3. The graph in Figure 3 contains $C_4$ and $\overline{C_4}$, and so, by \ref{splitt}, is not split; we leave the rest of the details to the reader.

\section{$\{P_5,\overline{P_5},$bull$\}$-free Graphs}

In this section we give a short proof of \ref{C5'}, and of Fouquet's result \ref{bull} on the structure of $\{P_5,\overline{P_5}$,bull$\}$-free graphs. The following is joint work with Max Ehramn.

Let $O_k$ be the bipartite graph on $2k$ vertices with bipartition $(\{ a_1,\dots a_k\} ,\{ b_1,\dots b_k\})$ in which $a_i$ is adjacent to $b_j$ if and only if $i+j\geq k+1$. If a graph $G$ is isomorphic to $O_k$ for some $k$, then we call $G$ a \textit{half graph}. Note that by construction half graphs are prime. In \cite{grow} the first author and Seymour proved:

\begin{lemma}\label{growing}
Let $G$ be a graph, and let $H$ be a proper induced subgraph of $G$. Assume that both $G$ and $H$ are prime, and that both $G$ and $\overline{G}$ are not half graphs. Then there exists an induced subgraph $H'$ of $G$, isomorphic to H, and a vertex $v\in V(G)\setminus V(H')$, such that $G[V (H')\cup \{v\}]$ is prime.

\end{lemma}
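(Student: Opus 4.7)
The plan is a proof by contradiction: suppose that for every induced copy $H'$ of $H$ in $G$ and every $v \in V(G) \setminus V(H')$ the graph $G[V(H') \cup \{v\}]$ is not prime, and derive that $G$ or $\overline{G}$ is a half graph, contradicting the hypothesis. The first step is to classify these failed extensions. Fix such $H'$ and $v$, and let $M$ be a homogeneous set of $G[V(H') \cup \{v\}]$ with $1 < |M| < |V(H')|+1$. Since $G[V(H')] \cong H$ is prime on at least four vertices, it admits no nontrivial module. A short case split on whether $v \in M$, using primality of $H'$, forces either (a) $v$ is complete or anticomplete to $V(H')$, or (b) there is a unique $u = u(v) \in V(H')$ with $N(v) \cap (V(H') \setminus \{u\}) = N(u) \cap (V(H') \setminus \{u\})$; I call such a $v$ a \emph{clone} of $u$. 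Uniqueness of $u$ follows because two candidates would form a homogeneous pair in $H'$, contradicting its primality.

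The key engine is the \emph{swap move}: in case (b), the set $V(H'') := (V(H') \setminus \{u\}) \cup \{v\}$ induces another copy of $H$ in $G$, since replacing $u$ by its clone $v$ preserves all internal adjacencies. I would then partition $V(G) \setminus V(H') = U^+ \cup U^- \cup \bigcup_{u \in V(H')} T_u$, where $U^\pm$ are the complete/anticomplete vertices and $T_u$ is the set of clones of $u$ (the $T_u$ being pairwise disjoint by the uniqueness observation). Since $G$ is prime, each clone class $\{u\} \cup T_u$ must be split by some vertex outside it; the only candidates are in $U^\pm$ or in $T_{u'}$ with $u' \neq u$. Applying the swap repeatedly — swap $u$ for $v \in T_u$ to obtain $H''$ and reclassify every vertex of $V(G) \setminus V(H'')$ under cases (a)/(b) with respect to $H''$ — forces extremely rigid constraints on the bipartite adjacencies between different $T_u$'s, and between the $T_u$'s and $U^\pm$.

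The final step is to show that these rigid constraints produce a family of vertices with linearly nested neighborhoods, that is, any two of them satisfy $N(x) \subseteq N(y)$ or $N(y) \subseteq N(x)$. This is precisely the defining structure of $O_k$, so $G$ or (after complementing) $\overline{G}$ is forced to be isomorphic to some half graph, yielding the desired contradiction. The main obstacle is this last extraction step: one has to rule out every non-nested adjacency pattern by combining the swap move with the primeness of $G$, while simultaneously tracking how a single vertex can cycle through cases (a) and (b) as the copy of $H$ is swapped. This combinatorial bookkeeping, rather than any single clever inequality, is the technical heart of the argument.
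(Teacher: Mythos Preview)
The paper does not prove this lemma at all: it is quoted verbatim from Chudnovsky and Seymour, \emph{Growing without cloning} (reference~\cite{grow} in the paper), and used as a black box to derive Lemmas~\ref{C5two} and~\ref{obst}. There is therefore no ``paper's own proof'' to compare your proposal against.

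For what it is worth, your outline is in the spirit of the actual Chudnovsky--Seymour argument: the trichotomy complete/anticomplete/clone for a vertex added to a prime graph is correct and standard, and the swap move is indeed the basic engine. What you label the ``technical heart'' --- passing from the clone-class structure to the conclusion that $G$ or $\overline{G}$ is literally isomorphic to some $O_k$ --- is genuinely the entire difficulty, and your proposal does not indicate how it goes. In particular, the sentence ``these rigid constraints produce a family of vertices with linearly nested neighborhoods'' hides all the work: one must show that \emph{every} vertex of $G$ fits into such a nested family (not just the clone classes relative to one fixed $H'$), and that the resulting order is total and yields exactly the $O_k$ adjacency pattern on both sides. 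Your sketch gives no mechanism for this, and the iterated-swap bookkeeping you allude to is not by itself enough; the published proof requires additional structural lemmas. So as written this is a plausible plan rather than a proof.
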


Next, we give a proof of Fouquet's result \ref{C5'}, which we restate:

\begin{lemma}\label{C5two}

If $G$ is a prime $\{P_5,\overline{P_5}\}$-free graph which contains $C_5$, then $G$ is isomorphic to $C_5$.

\end{lemma}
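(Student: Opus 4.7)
The plan is to derive the result from Lemma~\ref{growing} applied to the subgraph $H = C_5$ of $G$. Suppose for contradiction that $G$ is prime, $\{P_5,\overline{P_5}\}$-free, contains $C_5$, and is not isomorphic to $C_5$. Then $C_5$ is a proper prime induced subgraph of $G$. To invoke \ref{growing}, I first need to verify that neither $G$ nor $\overline{G}$ is a half graph; this is immediate since half graphs are bipartite, hence contain no odd cycle, whereas $G$ contains $C_5$ and $\overline{G}$ contains $\overline{C_5}\cong C_5$. Applying~\ref{growing} therefore produces an induced copy $H'$ of $C_5$ in $G$ and a vertex $v\in V(G)\setminus V(H')$ such that $G[V(H')\cup\{v\}]$ is prime.

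The remaining work is to show that no such one-vertex extension of $C_5$ can simultaneously be prime and $\{P_5,\overline{P_5}\}$-free. Write $H'=\{c_1,\ldots,c_5\}$ in cyclic order, and do a case analysis on $N(v)\cap V(H')$ up to the dihedral symmetries of $C_5$ and complementation (which is a symmetry of the problem since $\overline{C_5}\cong C_5$). The relevant cases are $|N(v)\cap V(H')|\in\{0,1,2,3,4,5\}$, and the $2$- and $3$-element cases split further by adjacency pattern in $H'$. I expect the analysis to run as follows. If $|N(v)\cap V(H')|\in\{0,5\}$, then $V(H')$ is itself a homogeneous set, contradicting primality. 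If $|N(v)\cap V(H')|=1$, say $v$ is adjacent only to $c_1$, then $\{v,c_1,c_2,c_3,c_4\}$ induces a $P_5$. If $|N(v)\cap V(H')|=2$ with the two neighbors consecutive (say $c_1,c_2$), then $\{v,c_1,c_5,c_4,c_3\}$ induces a $P_5$. If $|N(v)\cap V(H')|=2$ with the two neighbors non-consecutive (say $c_1,c_3$), then direct inspection shows $\{v,c_2\}$ is a homogeneous set (both are adjacent to $c_1,c_3$ and non-adjacent to $c_4,c_5$), again contradicting primality. The three remaining cases with $|N(v)\cap V(H')|\in\{3,4\}$ are handled by passing to $\overline{G}$: since $\overline{G}[V(H')\cup\{v\}]$ is prime iff the original is, and $\overline{G}[V(H')]$ is again a $C_5$, each such case is the complement of one already treated, yielding either a non-prime extension or an induced $\overline{P_5}$ in $G$.

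In every case we either contradict the primality of $G[V(H')\cup\{v\}]$ guaranteed by~\ref{growing}, or we exhibit an induced $P_5$ or $\overline{P_5}$ in $G$, contradicting the hypothesis. This proves $G=C_5$. The main obstacle is not conceptual but bookkeeping: one must check the homogeneous-set claim in the non-consecutive $|N(v)\cap V(H')|=2$ case carefully (since this is the case that saves the argument from producing no forbidden induced subgraph), and make sure the complementation argument covers the remaining $|N(v)\cap V(H')|\in\{3,4\}$ patterns up to the dihedral action on $C_5$.
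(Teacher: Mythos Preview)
Your proof is correct and follows essentially the same approach as the paper: both use Lemma~\ref{growing} (after noting that $G$ and $\overline{G}$ contain $C_5$ and hence are not bipartite half graphs) to obtain a prime one-vertex extension of $C_5$, then reduce by complementation to $|N(v)\cap V(H')|\le 2$ and do the case analysis. The paper is marginally terser, dismissing the $|N(v)\cap V(H')|=0$ and non-consecutive $|N(v)\cap V(H')|=2$ cases implicitly with the phrase ``to avoid a homogeneous set \ldots\ the only possibilities are'', but your explicit verification of those cases is correct and the argument is otherwise identical.
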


\begin{proof}
Suppose not, and so $C_5$ is a proper induced subgraph of $G$. Since $C_5$ is self-complementary both $G$ and $\overline{G}$ contain an odd cycle, hence are non-bipartite, and thus not half graphs. As $C_5$ is prime, by \ref{growing}, there exists a subgraph $H$ induced by $\{ v_1,v_2,v_3,v_4,v_5\}$ isomorphic to $C_5$, and a vertex $v\in V(G)\backslash V(H)$ such that the subgraph of $G$ induced by $V(H)\cup\{v\}$ is prime. Considering the complement, we may assume $v$ is adjacent to at most two vertices in $V(H)$. To avoid a homogeneous set in $G[V(H)\cup\{v\}]$, by symmetry, the only possibilities are for $N(v)=\{v_1\}$, in which case $\{ v,v_1,v_2,v_3,v_4\}$ is a $P_5$, or for $N(v)=\{v_1,v_2\}$, in which case $\{ v,v_2,v_3,v_4,v_5\}$ is a $P_5$, in both cases a contradiction. This proves \ref{C5two}. 
\end{proof}

Thus, to understand prime $\{P_5,\overline{P_5}$,bull$\}$-free graphs it is enough to study prime $\{P_5,\overline{P_5},C_5$,bull$\}$-free graphs.

\begin{lemma}\label{obst}

If $G$ is a prime $\{P_5,\overline{P_5}, C_5,$bull$\}$-free graph, then either $G$ or $\overline{G}$ is a half graph.

\end{lemma}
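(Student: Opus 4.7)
The plan is to argue by contradiction using~\ref{growing} to grow a prime induced subgraph one vertex at a time, starting from a $P_4$. Suppose $G$ is a prime $\{P_5,\overline{P_5},C_5,\text{bull}\}$-free graph and that neither $G$ nor $\overline{G}$ is a half graph; I aim to derive a contradiction.

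By~\ref{p1}, $G$ contains an induced $P_4$, call it $H$. If $|V(G)|=4$, then $G=P_4=O_2$ is a half graph, contrary to hypothesis. Hence $H$ is a proper prime induced subgraph of the prime graph $G$, and by assumption neither $G$ nor $\overline{G}$ is a half graph, so~\ref{growing} applies. It yields an induced copy $H'$ of $P_4$ in $G$ and a vertex $v\in V(G)\setminus V(H')$ such that $F:=G[V(H')\cup\{v\}]$ is a prime graph on five vertices containing $P_4$.

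The remaining task, and the main obstacle, is the combinatorial claim that every prime five-vertex graph containing an induced $P_4$ is isomorphic to one of $P_5$, $\overline{P_5}$, $C_5$, or the bull. I would prove this by a short direct case analysis. Writing $V(H')=\{p_0,p_1,p_2,p_3\}$ in path order and letting $S:=N(v)\cap V(H')$, I would enumerate the sixteen choices of $S$, using the left-right symmetry of the path to cut the work in half. The cases $|S|\in\{0,4\}$ make $V(H')$ itself a module; the cases $S=\{p_1\}$, $S=\{p_0,p_1\}$, $S=\{p_0,p_2\}$, and $S=\{p_0,p_1,p_2\}$ each produce an explicit twin pair (namely $\{v,p_0\}$ in the first two and $\{v,p_1\}$ in the last two), with reflections handling the analogous patterns on the other side of the path. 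The six surviving cases yield exactly the four forbidden graphs: $S\in\{\{p_0\},\{p_3\}\}$ gives $P_5$, $S=\{p_0,p_3\}$ gives $C_5$, $S=\{p_1,p_2\}$ gives the bull, and $S\in\{\{p_0,p_1,p_3\},\{p_0,p_2,p_3\}\}$ gives $\overline{P_5}$.

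Since each possibility for $F$ is excluded by hypothesis, we obtain the desired contradiction, proving~\ref{obst}. The only real work is the enumeration above; the rest is an immediate application of~\ref{p1} and~\ref{growing}.
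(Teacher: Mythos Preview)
Your proof is correct and follows essentially the same route as the paper: invoke~\ref{p1} to find a $P_4$, apply~\ref{growing} to extend it to a prime five-vertex graph, and then check that every such graph is one of $P_5$, $\overline{P_5}$, $C_5$, or the bull. The only difference is cosmetic: the paper shortens the enumeration by noting that $P_4$ is self-complementary and the forbidden list is closed under complementation, so one may assume $|S|\le 2$ and handle only three surviving cases rather than six.
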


\begin{proof}
Suppose not. By \ref{p1}, $G$ contains $P_4$, which is isomorphic to $O_2$. Since $G$ and $\overline{G}$ are not half graphs, it follows that $P_4$ is a proper induced subgraph of $G$. As $P_4$ is prime, by \ref{growing}, there exists a subgraph $H$ induced by $\{ v_1,v_2,v_3,v_4\}$ isomorphic to $P_4$, and a vertex $v\in V(G)\backslash V(H)$ such that the subgraph of $G$ induced by $V(H)\cup\{v\}$ is prime. Considering the complement, we may assume $v$ is adjacent to at most two vertices in $H$. To avoid a homogeneous set in $G[V(H)\cup\{v\}]$, by symmetry, the only possibilities are for $N(v)=\{v_1\}$, in which case $\{ v,v_1,v_2,v_3,v_4\}$ is a $P_5$, for $N(v)=\{v_1,v_4\}$, in which case $\{ v,v_1,v_2,v_3,v_4\}$ is a $C_5$, or for $N(v)=\{v_2,v_3\}$, in which case $\{ v,v_1,v_2,v_3,v_4\}$ is a bull, in all cases a contradiction. This proves \ref{obst}.
\end{proof}

Putting things together we obtain Fouquet's original structural result \cite{C5}:

\begin{theorem}\label{bull}
If $G$ is a $\{P_5,\overline{P_5}$,bull$\}$-free graph, then either

\begin{itemize}
\item $|V(G)|\leq 2$, or
\item $G$ is isomorphic to $C_5$, or
\item $G$ has a homogeneous set, or
\item $G$ or $\overline{G}$ is a half graph.
\end{itemize}
\end{theorem}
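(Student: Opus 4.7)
The plan is to reduce to the prime case and then invoke the two preceding lemmas. If $|V(G)| \leq 2$ the first conclusion holds, so I would assume $|V(G)| \geq 3$. If $|V(G)| = 3$, a quick check shows that every 3-vertex graph already has a homogeneous set (for any three vertices, some pair is joined or separated identically to the third), so the third conclusion holds. Thus from this point on I would assume $|V(G)| \geq 4$, and if $G$ has a homogeneous set we are again done. Otherwise, by definition, $G$ is prime.

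Now $G$ is a prime $\{P_5, \overline{P_5}, \mathrm{bull}\}$-free graph with at least four vertices. Here the case split is on whether $G$ contains $C_5$. If it does, then $G$ is a prime $\{P_5,\overline{P_5}\}$-free graph containing $C_5$, and Lemma~\ref{C5two} gives that $G$ is isomorphic to $C_5$, yielding the second conclusion. Otherwise $G$ is $C_5$-free as well, so $G$ is a prime $\{P_5,\overline{P_5},C_5,\mathrm{bull}\}$-free graph, and Lemma~\ref{obst} gives that $G$ or $\overline{G}$ is a half graph, yielding the fourth conclusion.

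There is essentially no technical obstacle here: all the graph-theoretic work was done in Lemmas~\ref{C5two} and~\ref{obst}, so the only thing to be careful about is the bookkeeping for small $|V(G)|$ and the case where $G$ is not prime. The reduction boils down to noting that ``not prime'' in the sense of this paper means either $|V(G)| \leq 3$ or $G$ has a homogeneous set, and handling the three-vertex case by hand. Hence the proof is essentially a three-line assembly of the previous two lemmas once the small/non-prime cases are dispatched.
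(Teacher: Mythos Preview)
Your proposal is correct and follows exactly the same approach as the paper: the paper's proof is the one-liner ``As all graphs on three vertices have a homogeneous set, \ref{bull} immediately follows from \ref{C5two} and \ref{obst},'' which is precisely your argument in compressed form.
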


\begin{proof}

As all graphs on three vertices have a homogeneous set, \ref{bull} immediately follows from \ref{C5two} and \ref{obst}.

\end{proof}

\section{Acknowledgement}

We would like to thank Ryan Hayward, James Nastos, Irena Penev, Matthieu Plumettaz, Paul Seymour, and Yori Zwols for useful discussions, and Max Ehramn for telling us about the $H_6$-conjecture.


\begin{thebibliography}{99}

\bibitem{grow} M. Chudnovsky and P. Seymour, Growing without cloning, SIDMA, 26 (2012), 860-880.

\bibitem{split} S. F\"{o}ldes and P.L. Hammer, Split graphs, Congres. Numer., 19 (1977), 311-315.

\bibitem{C5} J.L. Fouquet, A decomposition for a class of $(P_5, \overline{P_5})$-free graphs, Discrete Math., 121 (1993), 75-83.



\bibitem{brittle} C.T. Ho\`{a}ng and N. Khouzam, On brittle graphs, J. Graph Theory, 12 (1988), 391-404.

\bibitem{C4} C.T. Ho\`{a}ng and B.A. Reed, Some classes of perfectly orderable graphs, J. Graph Theory,
13 (1989), 445-463.

\bibitem{MS} J. Nastos, $(P_5, \overline{P_5})$-free graphs, masters thesis, Dept. of Computer Science, University of Alberta, Edmonton, 2006.

\bibitem{P33} D. Seinsche, On a property of the class of $n$-colorable graphs, J. Comb. Theory B 16 (1974), 191-193.

\end{thebibliography}
\end{document}